\numberwithin{equation}{section}
\def\R{\mathbb R}
\def\N{\mathbb N}
\def\H{\mathcal H}
\newtheorem{thm}{Theorem}[section]
\theoremstyle{remark}
\theoremstyle{definition}
\title{Morse inequalities for the area functional}
\author{Fernando C. Marques, Rafael Montezuma and  Andr\'e Neves}
\address{Princeton University \\ Princeton NJ 08544 \\USA}
\email{coda@math.princeton.edu}
\address{University of Massachusetts \\ Department of Mathematics \\ Amherst MA  01003\\ USA / Universidade Federal do Cear\'{a}\\ Departamento de Matem\'{a}tica \\  Fortaleza, CE 60455-760 \\ Brazil}
\email{rmontezuma@math.umass.edu}
\address{University of Chicago \\ Department of Mathematics \\ Chicago IL 60637\\ USA}
\email{aneves@uchicago.edu}
\thanks{The first author is partly supported by NSF-DMS-1811840. The third author is partly supported by NSF  DMS-1710846 and a Simons Investigator Grant.}
\begin{document}

\maketitle

\begin{abstract}
In this article we prove the strong Morse inequalities  for the area functional in codimension one, assuming that the ambient dimension  satisfies $3\leq (n+1)\leq 7$, in both  the closed and the boundary cases.
 \end{abstract}

\section{Introduction}

Morse theory relates the structure of the set of critical points of a real-valued function to the topology of the space on which the function is defined. It was initially devised to study  geodesics. In this paper we establish a Morse theory for minimal hypersurfaces in a   Riemannian manifold $(M^{n+1},g)$, assuming $3 \leq (n+1) \leq 7$, by proving that the strong Morse inequalities hold 
(\cite{morse-book}).

The area functional  is defined on the space $\mathcal{Z}_n(M^{n+1}, \mathbb{Z}_2)$ of $n$-dimensional mod 2 flat cycles $T=\partial U$, endowed with the flat topology.
Denote by $b_k(a)$ the $k$-th  Betti number   of the space of cycles with area less than $a$ (with coefficients in $\mathbb{Z}_2$) and by $c_k(a)$ the number of closed, embedded, minimal hypersurfaces in $\mathcal{Z}_n(M^{n+1}, \mathbb{Z}_2)$ with area less than $a$ and index equal to $k$ (notice that the trivial cycle counts).

\subsection{Theorem}\label{theorem.closed}
 {\em  Suppose $g$ is a $C^\infty$-generic (bumpy) Riemannian metric. For each $a\in (0,\infty)$, we have $b_k(a)<\infty$ for every $k\in \mathbb{Z}_+$ and the Strong Morse Inequalities hold:
 \begin{eqnarray*}
 c_r(a)-c_{r-1}(a)+\cdots + (-1)^rc_0(a) \geq b_r(a)-b_{r-1}(a)+\cdots + (-1)^rb_0(a)
 \end{eqnarray*}
 for every $r \in \mathbb{Z}_+$. In particular,
 \begin{eqnarray*}
 c_r(a)\geq b_r(a)
 \end{eqnarray*}
  for every $r \in \mathbb{Z}_+$.
  }
  
    
  In the case of  two-dimensional parametrized minimal surfaces spanning a wire in Euclidean space, the  inequalities were proven by Morse and Tompkins \cite{morse-tompkins}. Such a theory is also in Shiffman \cite{shiffman}. Struwe \cite{struwe} found a clear proof from the point of view of functional analysis, using Sobolev spaces and  Palais-Smale  type compactness (\cite{palais-smale}).
  
Several techniques have been developed recently  to characterize the Morse index of  minimal hypersurfaces produced by min-max methods (\cite{marques-neves-index}, \cite{marques-neves-lower-bound}). Almgren \cite{almgren-varifolds} and Pitts \cite{pitts} had shown how to construct closed minimal hypersurfaces by doing min-max over one-parameter sweepouts, but no information on the Morse index was given. The papers \cite{marques-neves-index} and \cite{marques-neves-lower-bound} use the fact that $\mathcal{Z}_n(M^{n+1}, \mathbb{Z}_2)$ has the same cohomology groups of $\mathbb{RP}^\infty$, which implies the existence of multiparameter sweepouts.  In fact,  $H^k(\mathcal{Z}_n(M^{n+1}, \mathbb{Z}_2), \mathbb{Z}_2)=\mathbb{Z}_2$ for every $k \in \mathbb{N}$. 
  
  The point
  was to show that generically, in which case the metric is bumpy meaning  closed minimal hypersurfaces are nondegenerate (White \cite{white2}, \cite{white3}), a  closed minimal hypersurface produced by min-max over $k$-parameter sweepouts must have index equal to $k$. General upper bounds were obtained in \cite{marques-neves-index} and lower bounds were proven in \cite{marques-neves-lower-bound} assuming the Multiplicity One Conjecture: any component of a min-max minimal hypersurface has multiplicity one.  As in \cite{marques-neves-lower-bound}, our paper uses the local min-max property of White \cite{white-minmax}.
  
 Xin Zhou proved the Multiplicity One Conjecture in  \cite{zhou-multiplicity} for the Almgren-Pitts setting, using the Zhou-Zhu  \cite{zhou-zhu}      
 prescribed mean curvature version of the theory. If  $(n+1)=3$,  for the Allen-Cahn variant of the theory,  the conjecture had been proven by Chodosh and Mantoulidis \cite{chodosh-mantoulidis}. Combining the results of \cite{marques-neves-index}, \cite{marques-neves-lower-bound} and \cite{zhou-multiplicity}, for a generic metric $g$,   there exists a closed minimal hypersurface $\Sigma_k\subset (M^{n+1},g)$ with ${\rm index}(\Sigma_k)=k$ for every $k\in \mathbb{N}$.
 Since $b_k(\mathcal{Z}_n(M^{n+1}, \mathbb{Z}_2))=1$, this corresponds to the inequality $c_k\geq b_k$. Notice that the area of $\Sigma_k$ is equal to the $k$-width $\omega_k(M,g)$ of $M$ (see \cite{marques-neves-lower-bound} for the definition).  For more details see the survey paper \cite{marques-neves-morse-survey}.

 Theorem \ref{theorem.closed}  generalizes this by accounting  for all (separating) closed minimal hypersurfaces in $(M,g)$, and not just those realizing the volume spectrum. 
  
 We also consider the boundary case in this paper. De Lellis and Ramic \cite{delellis-ramic} proved a mountain pass theorem using sweepouts based on those of \cite{delellis-tasnady}. This establishes that if there are two strictly stable, embedded minimal hypersurfaces with the same boundary, and assuming there are no closed minimal hypersurfaces, then there must be a third minimal hypersurface with that boundary. The boundary is assumed to be contained in a convex hypersurface of the ambient space, in which case regularity holds. Due to the properties of the sweepouts used in \cite{delellis-ramic}, they also need the strictly stable hypersurfaces to bound an open set.
 
A mountain pass theorem for the Almgren-Pitts setting was achieved in \cite{montezuma}. The boundary is still assumed to be in a convex hypersurface, but the condition of the hypersurfaces bounding an open set is not imposed. The strictly stable hypersurfaces are assumed to be homologous.
This allows the construction of a path between them that is continuous in the flat topology. The fact that the boundary is nonempty implies the mutiplicity one property,  which is crucial for the arguments of our paper.

For the boundary version of the  Morse inequalities, we assume $(M^{n+1},g)$, $3\leq (n+1) \leq 7$, has strictly convex boundary $\partial M$. We suppose  $M$
contains no closed minimal hypersurfaces. Let $\gamma^{n-1}\subset \partial M$ be an $(n-1)$-dimensional smooth, closed submanifold.
We assume every minimal hypersurface $\Sigma \subset M$ with $\partial\Sigma=\gamma$ is nondegenerate, which holds true for Baire generic boundaries $\gamma\subset \partial M$ (White \cite{white-parametric}).

\subsection{Theorem}\label{morse.inequalities.1}{\em The Strong Morse Inequalities for the area functional hold, i.e.
 $$
 c_k(\gamma)-c_{k-1}(\gamma)+ \cdots +(-1)^kc_0(\gamma) \geq   (-1)^k
 $$
 for every $k\geq 0$. Here $c_k(\gamma)$ denotes the number of minimal hypersurfaces of index $k$ with boundary $\gamma$.}
 \medskip
 
 The mountain pass theorem when $3 \leq (n+1) \leq 7$ is a consequence of the strong Morse inequalities, since we have
 $$
 c_1(\gamma) \geq c_0(\gamma)-1.
 $$

 There is a version of Theorem \ref{morse.inequalities.1} counting only hypersurfaces with area less than $a$, like in Theorem \ref{theorem.closed}, which can be proven with similar methods.
 If there are closed minimal hypersurfaces in $M$, they will be achieved with multiplicity one if the metric is also bumpy. The strong Morse 
 inequalities will continue to hold in that case for hypersurfaces with area less than $a$. 
 
 If $(n+1)\geq 8$,  Schoen-Simon regularity theory (\cite{schoen-simon}) for stable minimal hypersurfaces implies that min-max minimal hypersurfaces are smooth outside a possible singular set of codimension 7. The Morse index of the minimal variety is the index of the smooth part (Dey \cite{dey}). Upper bounds for the Morse index were proven by Li \cite{li}. A multiplicity one property and lower bounds are still open. The mountain pass theorems of \cite{delellis-ramic} and \cite{montezuma} hold for $(n+1) \geq 8$ with singular hypersurfaces. The third solution should have index one.
 
The paper is organized as follows. In Section 2, we introduce some notation of Geometric Measure Theory. In Section 3, we prove the
strong Morse inequalities in the closed case. In Section 4, we prove a compactness theorem for the boundary case. In Section 5, we prove
the strong Morse inequalities for the boundary case.

\medskip

{\it Acknowledgments:} The authors would like to thank   the support of the  Institute for Advanced Study, where part of this
work was conducted.

\section{Preliminaries}

Let $(M^{n+1},g)$ be an $(n+1)$-dimensional closed Riemannian manifold.  We assume, for convenience, that $(M,g)$ is isometrically embedded in some Euclidean space $\mathbb{R}^L$.

 The spaces we will work with in this paper are:
\begin{itemize}
\item the space ${\bf I}_{l}(M;\mathbb{Z}_2)$  of $l$-dimensional  flat chains    in $\mathbb{R}^L$ with coefficients in $\mathbb{Z}_2$ and support contained  in $M$, where $l=n$ or  $n+1$; 
\item the space ${\mathcal Z}_n(M;\mathbb{Z}_2)$  of flat chains  $T \in {\bf I}_n(M;\mathbb{Z}_2)$ such that there exists $U \in {\bf I}_{n+1}
(M;\mathbb{Z}_2)$ with  $\partial U=T$;
\item the closure $\mathcal{V}_n(M)$, in the weak topology, of the space of $n$-dimensional rectifiable varifolds in $\mathbb{R}^L$ with support contained in $M$. 
\end{itemize}
We assume implicitly that ${\bf M}(T)+{\bf M}(\partial T)<\infty$ for every $T\in {\bf I}_{l}(M;\mathbb{Z}_2)$, where ${\bf M}$ denotes the mass functional. We will refer to $\tilde{{\mathcal Z}}_n(M;\mathbb{Z}_2)$ as the {\it space of cycles}. Flat chains over a finite coefficient group were introduced by Fleming \cite{fleming}.

Given $T\in {\bf I}_l(M;\mathbb{Z}_2)$,  we denote by $|T|$ and $||T||$ the integral varifold   and the Radon measure in $M$ associated with $|T|$, respectively;  given $V\in \mathcal{V}_n(M)$, $||V||$ denotes the Radon measure in $M$ associated with $V$.  The space of $n$-dimensional integral 
varifolds with support in $M$ is denoted by $\mathcal{IV}_n(M)$.

 The  spaces above come with several relevant metrics. For instance, the metric ${\bf M}(T_1,T_2)={\bf M}(T_1-T_2)$ defines the mass topology. The  {\it flat metric} 
 $$
 \mathcal F(T_1,T_2) = \inf \{{\bf M}(Q)+{\bf M}(R): T_1-T_2=Q+\partial R\}
 $$
 induces the flat topology (we put 
$\mathcal{F}(T)=\mathcal{F}(T,0)$). The  ${\bf F}$-{\it metric}  is defined in  the book of Pitts  \cite[page 66]{pitts} and   induces the varifold weak topology on $\mathcal{V}_n(M)\cap \{V: ||V||(M) \leq a\}$ for any $a$. It satisfies 
$$||V||(M) \leq ||W||(M) +{\bf F}(V,W)$$ for all $V,W \in \mathcal{V}_n(M)$. We denote by ${\overline{\bf B}^{\bf F}_{\delta}(V)}$ and ${\bf B}^{\bf F}_{\delta}(V)$ the closed and open metric balls, respectively, with radius $\delta$ and center $V \in \mathcal{V}_n(M)$. Similarly, we denote by ${\overline{\bf B}^{\mathcal F}_{\delta}(T)}$ and ${\bf B}^{\mathcal F}_{\delta}(T)$ the corresponding balls with  center $T \in \mathcal{Z}_n(M;\mathbb{Z}_2)$ in the flat metric. 
Finally,  the ${\bf F}$-{\it metric} on ${\bf I}_l(M;\mathbb{Z}_2)$ is defined by
$$ {\bf F}(S,T)=\mathcal{F}(S-T)+{\bf F}(|S|,|T|).$$
We have ${\bf F}(|S|,|T|) \leq {\bf M}(S,T)$ and hence $ {\bf F}(S,T) \leq 2{\bf M}(S,T)$ for any $S,T \in {\bf I}_l(M;\mathbb{Z}_2)$.

We assume that  ${\bf I}_l(M;\mathbb{Z}_2)$ and  $\tilde{{\mathcal Z}}_n(M;\mathbb{Z}_2)$ have the topology induced by the flat metric. When endowed with
the topology of the ${\bf F}$-metric or the mass norm, these spaces will be denoted by  ${\bf I}_l(M;{\bf F};\mathbb{Z}_2)$, $\tilde{{\mathcal Z}}_n(M;{\bf F};\mathbb{Z}_2)$, ${\bf I}_l(M;{\bf M};\mathbb{Z}_2)$, $\tilde{{\mathcal Z}}_n(M;{\bf M};\mathbb{Z}_2)$, respectively.

\section{Morse inequalities}

Let $\mathcal{M}$ be the space of all smooth Riemannian metrics on $M$. Given $g\in \mathcal{M}$, let $\tilde{\mathfrak{M}}_g$ denote the collection of all smooth, closed, embedded,  $g$-minimal hypersurfaces in $M$ and let $\mathfrak{M}_g$ be the subset of $\tilde{\mathfrak{M}}_g$ consisting of connected minimal hypersurfaces.

Suppose that the Riemannian metric $g$ on the closed manifold $M^{n+1}$,  $3\leq (n+1)\leq 7$, is bumpy.
 This means that every closed minimal hypersurface $\Sigma \subset M$  is  a nondegenerate critical point of the area functional, including immersed hypersurfaces. This condition holds for $C^\infty$ generic metrics (\cite{white2}, \cite{white3}, \cite{AmbCarSha}).
 
 Given $k\in \mathbb{Z}_+$ and $a \in (0,\infty)$, let $c_k(a)$ denote the number of smooth, embedded, closed, minimal hypersurfaces $\Sigma \subset M$ in $\mathcal{Z}_n(M^{n+1},\mathbb{Z}_2)$ with ${\rm index}(\Sigma)=k$ and ${\rm area}(\Sigma)<a$. The hypersurface $\Sigma$ could be disconnected, in which case the index is the sum of the indices of the components. If we want to emphasize the dependence on the Riemannian metric $g$, we will write $c_{k,g}(a)$ instead of $c_k(a)$.

 We denote by $b_k(a)$ the  $k$-th Betti number of the topological space
 $$
\mathcal{Z}^a = \{ T\in \mathcal{Z}_n(M,{\bf F};\mathbb{Z}_2): {\bf M}(T)<a\},
 $$
 endowed with the ${\bf F}$-metric, with coefficients in $\mathbb{Z}_2$. (The  inequalities also hold for the Betti numbers with coefficients in any field, by adapting the methods.) The set $\mathcal{Z}^a$ is open in $\mathcal{Z}_n(M,{\bf F};\mathbb{Z}_2)$. If we want to specify the dependence
 on the metric $g$, we will write $\mathcal{Z}^a_g$.

 \subsection{Theorem}\label{theorem.closed.2}
 {\em  For each $a\in (0,\infty)$, we have $b_k(a)<\infty$ for every $k\in \mathbb{Z}_+$ and the Strong Morse Inequalities hold:
 \begin{eqnarray*}
 c_r(a)-c_{r-1}(a)+\cdots + (-1)^rc_0(a) \geq b_r(a)-b_{r-1}(a)+\cdots + (-1)^rb_0(a)
 \end{eqnarray*}
 for every $r \in \mathbb{Z}_+$. In particular,
 \begin{eqnarray*}
 c_r(a)\geq b_r(a)
 \end{eqnarray*}
  for every $r \in \mathbb{Z}_+$.
  }
  \medskip
  
  Notice that:
   \subsection{Lemma}
 {\em We have $c_k(a)<\infty$ for every $k$ and $a$.}

\begin{proof}
This follows immediately from Sharp's Compactness Theorem (\cite{sharp}), since the metric is bumpy.
\end{proof}

  We will use:
  \subsection{Proposition}\label{rational.independence.closed} {\em For a $C^\infty$-generic Riemannian metric $g$ on $M$, we have:
\begin{itemize}
\item every $g$-minimal hypersurface is $g$-nondegenerate;
\item and if 
$$
p_1 \cdot {\rm area}_{g}(\Sigma_1) + \cdots + p_N \cdot {\rm area}_{g}(\Sigma_N)=0,
$$
with $\{p_1, \dots, p_N\} \subset \mathbb{Z}$, $\{\Sigma_1, \dots, \Sigma_N\}\subset \mathfrak{M}_{g},$ and $\Sigma_k \neq \Sigma_l$ whenever $k\neq l$, then
$$
p_1 = \cdots = p_N = 0.
$$
\end{itemize}
 }

 \begin{proof}

Let $\mathcal{U}_{p,\alpha}$ be the set of Riemannian metrics $g\in \mathcal{M}$ such that:
\begin{itemize}
\item every $g$-minimal hypersurface with  index at most $p$ and area at most $\alpha$ is $g$-nondegenerate;
\item and if 
$$
m_1 \cdot {\rm area}_{g}(\Sigma_1) + \cdots + m_N \cdot {\rm area}_{g}(\Sigma_N)=0,
$$
with $\{m_1, \dots, m_N\} \subset \mathbb{Z}$,  $\{\Sigma_1, \dots, \Sigma_N\}\subset \mathfrak{M}_{g},$ $|m_k|\leq p$, $\text{index}(\Sigma_k) \leq p$ and $\text{area}(\Sigma_k)\leq \alpha$ for every $k$, and $\Sigma_k \neq \Sigma_l$ whenever $k\neq l$, then
$$
m_1 = \cdots = m_N = 0.
$$
\end{itemize}

\subsection{Claim} {\em The set $\mathcal{U}_{p,\alpha}$ is open and dense in $\mathcal{M}$, for every $p\in \mathbb{Z}_+$ and $\alpha>0$.}

\medskip

Openness follows  from Sharp's Compactness Theorem \cite{sharp}.

Let $g \in \mathcal{M}$ and let $\mathcal{V}\subset  \mathcal{M}$ be a $C^\infty$-neighborhood of $g$. By White (\cite{white2}, \cite{white3}), there exists a bumpy metric $g'\in \mathcal{V}$.  

By Sharp's Compactness Theorem (\cite{sharp}), there are only finitely many connected $g'$-minimal hypersurfaces  with index at most $p$ and area at most $\alpha$. Let $\{S_1, \dots, S_q\}$ be the collection of such hypersurfaces, with $S_k\neq S_l$ whenever $k\neq l$.

Recall that if $\tilde{g}=\exp(2\phi)g'$, then the second fundamental form of $\Sigma$ with respect to $\tilde{g}$ is given by (Besse \cite{besse}, Section 1.163)
\begin{eqnarray*}\label{second.fundamental.form}
A_{\Sigma, \tilde{g}} =  A_{\Sigma,g'} -  g' \cdot (\nabla \phi)^\perp,
\end{eqnarray*}
where $(\nabla \phi)^\perp(x)$ is the component of $\nabla \phi$ normal to $T_x\Sigma$.

We can pick $p_l \in S_l \setminus (\cup_{k\neq l} S_k)$ for every $l=1, \dots, q$ (see the proof of Lemma 4 of 
\cite{marques-neves-song}). Let $\varepsilon>0$ be sufficiently small so that $B_\varepsilon(p_k)\cap B_\varepsilon(p_l) = \emptyset$ whenever $k\neq l$, $B_\varepsilon(p_l) \cap (\cup_{k\neq l} S_k) = \emptyset$ for every $l=1, \dots, q$. We choose a nonnegative function $f_l\in C_c^\infty(B_\varepsilon(p_l))$, ${f_l}_{|S_l}\not \equiv 0$, such that $(\nabla_{g'} f_l)(x) \in T_xS_l$ for every $x\in S_l$. Hence $S_l$ is still minimal with respect to the metric $\hat{g}(t_1,\dots,t_q) = \exp(2(t_1f_1+\cdots+t_qf_q))g'$, for every $l=1, \dots, q$. 

Let $(t_1^{(i)}, \dots, t_q^{(i)})\in (0,1]^q$ be a sequence converging to zero so that, by putting $g_i=\hat{g}(t_1^{(i)}, \dots, t_q^{(i)})$, we have that  the real numbers
$$
{\rm area}_{g_i}(S_1), \dots, {\rm area}_{g_i}(S_q)
$$
are linearly independent over $\mathbb{Q}$. Sharp's Compactness Theorem, together with the fact that $S_l$ is $g'$-nondegenerate for every $l=1, \dots, q$, implies that for sufficiently large $i$ the only connected $g_i$-minimal hypersurfaces with index at most $p$ and 
area at most $\alpha$ are $S_1, \dots, S_q$. Hence
$g_i \in \mathcal{V} \cap \mathcal{U}_{p, \alpha}$ for sufficiently large $i$. This proves density of $\mathcal{U}_{p, \alpha}$, which finishes the proof of the claim.

\medskip

The claim implies that the set $X=\cap_{N \in \mathbb{N}} \, \mathcal{U}_{N,N}$ is Baire-residual in $\mathcal{M}$, hence it is also dense. This finishes the proof of the Proposition.

\end{proof}

\begin{proof}[Proof of Theorem \ref{theorem.closed.2}]

We denote by $c_j(t)$ the number of elements  $\Sigma \in \tilde{\mathfrak{M}}_g \cap \mathcal{Z}_n(M^{n+1}, \mathbb{Z}_2)$ with $\text{index}(\Sigma)=j$  and $\text{area}(\Sigma)<t$. 

Recall
$$
\mathcal{Z}^t=\{T \in \mathcal{Z}_n(M;{\bf F}; \mathbb{Z}_2): {\bf M}(T) < t\}.
$$

All the homology groups will be computed with coefficients in $\mathbb{Z}_2$. A $k$-dimensional homology class is represented by finite chains of singular $k$-simplices $\sum s_i$, where $s_i:\Delta^k \rightarrow \mathcal{Z}_n(M;{\bf F};\mathbb{Z}_2)$ is a continuous map defined on the standard $k$-simplex $\Delta^k$ for every $i$.

 \subsection{Homology Min-Max Theorem}\label{homology.minmax}
 {\em Let $\sigma \in H_k(\mathcal{Z}^t, \mathcal{Z}^s)$ be a nontrivial homology class, $0\leq s<t$, and let
 $$
 W(\sigma) = \inf_{[\sum s_i]=\sigma}\sup_{i, x\in \Delta^k} {\bf M}(s_i(x)).
 $$
Suppose that, for some $\varepsilon>0$, there is no  $\Sigma'\in \tilde{\mathfrak{M}}_g$ with $\text{area}(\Sigma') \in (s-\varepsilon, s)$ and $\text{index}(\Sigma') \leq k-1$. Then  $W(\sigma)\in [s,t)$ with $W(\sigma)>0$ if $s=0$. Moreover  there exists $\Sigma \in \tilde{\mathfrak{M}}_g \cap \mathcal{Z}_n(M^{n+1}, \mathbb{Z}_2)$ with ${\rm index}(\Sigma)=k$ and
 $$
 {\rm area}(\Sigma) = W(\sigma).
 $$
 }
 
 \begin{proof}
 
 Since there is a representative $[\sum s_i]=\sigma$ with $s_i(\Delta^k)\subset \mathcal{Z}^t$ for each $i$, it is clear that $W(\sigma)<t$. If $W(\sigma)<s$, then there is a representative $[\sum s_i]=\sigma$ with $ s_i(\Delta^k)\subset \mathcal{Z}^s$ for every $i$. This implies $\sigma=0$, which is
 a contradiction. Hence $W(\sigma)\in [s,t)$.

 Let $\{\sum_i s_i^{(j)}\}_j$ be a sequence of representatives ($[\sum_i s_i^{(j)}]=\sigma$)  such that 
 $$
\lim_{j\rightarrow \infty} \sup_{i, x\in \Delta^k} {\bf M}(s_i^{(j)}(x))=W(\sigma).
 $$
 Associated to  the  chain $\sum_i s_i^{(j)}$ we have a $\Delta$-complex $X^{(j)}$ and a map $\Phi^{(j)}: X^{(j)} \rightarrow \cup_i s_i^{(j)}(\Delta^k)$ (see Section 2.1 of \cite{hatcher}) that is continuous in the ${\bf F}$-metric. The boundary $\partial X^{(j)}$ is the union of $(k-1)$-faces of  $\sum_i s_i^{(j)}$ that do not cancel out
 in the calculation of $\partial (\sum_i s_i^{(j)})$. Because the  chain $\sum_i s_i^{(j)}$ is a relative cycle, we have that $\Phi^{(j)}(\partial X^{(j)})\subset \mathcal{Z}^s$. We also have
 $$
 \Phi^{(j)}_*([X^{(j)}]) =\sigma.
 $$

  We can consider the class $\Pi_{\partial}^{(j)}$ of all ${\bf F}$-continuous maps  $\Psi:\partial X^{(j)} \rightarrow \mathcal{Z}^s$ that are  ${\bf F}$-homotopic to $\Phi^{(j)}_{|\partial X^{(j)}}$ through maps taking values in $\mathcal{Z}^s$. By applying
 min-max theory for the area functional to $\Pi_{\partial}^{(j)}$, and using the fact that there is no  $\Sigma'\in \tilde{\mathfrak{M}}_g$ with $\text{area}(\Sigma') \in (s-\varepsilon, s)$ and $\text{index}(\Sigma') \leq k-1$, we conclude that there is an element
 $\Psi'\in \Pi_{\partial}^{(j)}$ with $\sup_{y\in \partial X^{(j)}} {\bf M}(\Psi(y))\leq s-\varepsilon/2$. In particular, we can suppose that
 $$
 \sup_{y\in \partial X^{(j)}} {\bf M}(\Phi^{(j)}(y))\leq s-\varepsilon/2.
 $$
 
 Notice that in Section 3 of \cite{marques-neves-index}, a homotopy class is defined in an unusual way: the homotopy class of an ${\bf F}$-continuous map
 is defined as the class  of all ${\bf F}$-continuous maps that are homotopic to the original one in the flat topology.
But Proposition 3.5 of \cite{marques-neves-infinitely}  has been upgraded to the mass topology in Proposition 3.2 of \cite{marques-neves-lower-bound}.  An inspection of Section 3 of \cite{marques-neves-index} shows that the min-max theory also works for usual ${\bf F}$-continuous homotopy classes. Deformation Theorem A of \cite{marques-neves-index} also applies to to rule out minimal hypersurfaces of higher index.

Since $W(\sigma)\geq s$, we have that the techniques of \cite{marques-neves-lower-bound} for standard
 homology classes can be adapted to the case of relative homology classes as in the statement of the theorem. There is $\delta >0$ such that for sufficiently large $j$,
$$
\sup_{x\in X^{(j)}} {\bf M}(\Phi^{(j)}(x))\leq t-\delta
$$
for every $x\in X^{(j)}$.

Fix $j$. Let $\{g_i\}$ be a sequence of metrics as in Proposition \ref{rational.independence.closed} such that $g_i$ converges to $g$ in the smooth topology. We consider
$$
\sigma_{i}^{(j)} = \Phi^{(j)}_*([X^{(j)}]) \in H_k(\mathcal{Z}_{g_i}^{t-\delta/2},\mathcal{Z}_{g_i}^{s-\varepsilon/4})
$$
 for sufficiently large $i$. Notice that in this case we have
 $$
 \mathcal{Z}_{g_i}^{t-\delta/2} \subset \mathcal{Z}_g^{t}, \, \, \, \mathcal{Z}_{g_i}^{s-\varepsilon/4} \subset \mathcal{Z}_g^s
 $$
Since $\sigma\neq 0$, this implies $\sigma_i^{(j)}\neq 0$ for sufficiently large $i$. Notice that
$$
\left(\sup \frac{g(v,v)}{g_i(v,v)}\right)^{-\frac{n}{2}}W(\sigma) \leq W_{g_i}(\sigma_i^{(j)}) \leq \sup_{x\in X^{(j)}} {\bf M}_{g_i}(\Phi^{(j)}(x)),
$$
hence
$$
W(\sigma) \leq \liminf_{i\rightarrow \infty}W_{g_i}(\sigma_i^{(j)})\leq  \limsup_{i\rightarrow \infty}W_{g_i}(\sigma_i^{(j)})\leq  \sup_{x\in X^{(j)}} {\bf M}_{g}(\Phi^{(j)}(x)).
$$

Notice that there is no $g_i$-minimal hypersurface with index less than or equal to $k-1$ and area in $(s-3\varepsilon/4,s-\varepsilon/4)$ for large $i$. Hence, by the same argument as before, we can find a sequence
$$
\Phi_l^{i,(j)}: X_l^{i,(j)}\rightarrow \mathcal{Z}_{g_i}^{t-\delta/2}, \, \, \, \Phi_l^{i,(j)}(\partial X_l^{i,(j)}) \subset \mathcal{Z}_{g_i}^{s-\varepsilon/2} \subset \mathcal{Z}_{g_i}^{s-\varepsilon/4},
$$
such that
$$
(\Phi_l^{i,(j)})_*([X_l^{i,(j)}]) =\sigma_i^{(j)}
$$
and
$$
\lim_{l \rightarrow \infty} \sup_{x\in X_l^{i,(j)}}  {\bf M}_{g_i}(\Phi_l^{i,(j)}(x))=W_{g_i}(\sigma_i^{(j)}).
$$

Let $\Pi_l^{i,(j)}$ be the homotopy class of $\Phi_l^{i,(j)}:X_l^{i,(j)} \rightarrow  \mathcal{Z}_{g_i}^{t-\delta/2}$ relative to $\partial X_l^{i,(j)}$. If
 $$
 {\bf L}(\Pi_l^{i,(j)}) =\inf_{\Phi \in \Pi_l^{i,(j)}}\sup_{x\in X_l^{i,(j)}} {\bf M}_{g_i}(\Phi(x)),
 $$
 then
 $$
 W_{g_i}(\sigma_i^{(j)}) \leq  {\bf L}(\Pi_l^{i,(j)})\leq \sup_{x\in X_l^{i,(j)}}  {\bf M}_{g_i}(\Phi_l^{i,(j)}(x)).
 $$
 In particular, $\lim_{l\rightarrow \infty} {\bf L}(\Pi_l^{i,(j)})=W_{g_i}(\sigma_i^{(j)})$.
 

Min-max theory gives a stationary integral varifold $V_l^{i,(j)}$ whose support is a $g_i$-minimal hypersurface such that 
$\text{area}_{g_i}(V_l^{i,(j)})={\bf L}(\Pi_l^{i,(j)})$. Deformation Theorem A of \cite{marques-neves-index} allows us to rule out minimal hypersurfaces of higher index, so that we can choose $V_l^{i,(j)}$ to satisfy $\text{index}(V_l^{i,(j)})\leq k$. Here $\text{index}(V_l^{i,(j)})$ is defined to be the sum of the indices of the components of $\text{spt}(V_l^{i,(j)})$. Because the metric is bumpy, for sufficiently large $l$ we have 
$${\bf L}(\Pi_l^{i,(j)})=\text{area}_{g_i}(V_l^{i,(j)})=W_{g_i}(\sigma_i^{(j)}).$$

Moreover, the work of X. Zhou \cite{zhou-multiplicity} gives a minimizing sequence $\{\Psi_m\}_m$ in $\Pi_l^{i,(j)}$ so that the critical set ${\bf C}(\{\Psi_m\}_m)$ contains a multiplicity one, minimal hypersurface $\Sigma_l^{i,(j)} \in \mathcal{Z}_n(M^{n+1}, \mathbb{Z}_2)$ such that
${\rm index}(\Sigma_l^{i,(j)})\leq k$. Because $g_i$ is as in Proposition \ref{rational.independence.closed}, $\Sigma_l^{i,(j)}$ is the unique smooth element of
${\bf C}(\{\Psi_m\}_m)$.  
Theorem 4.7 (i) of \cite{marques-neves-lower-bound} can be upgraded to homotopy in the ${\bf F}$-metric. The minimizing sequence produced by Theorem 4.9 of \cite{marques-neves-lower-bound} is in the usual ${\bf F}$-metric homotopy class.
The proof of Theorem 7.2 of \cite{marques-neves-lower-bound} can also be adapted to our setting, which proves that 
$\text{index}(\Sigma_l^{i,(j)})=k$. The only point to check is that the singular cycle $z_{i,p}$  of the proof of Theorem 7.2 of \cite{marques-neves-lower-bound} is null-homologous in the ${\bf F}$-metric. But this is a consequence of Theorem 3.8 of \cite{marques-neves-lower-bound}
with $\mathcal{K}=\tilde{\Sigma}_{q_p}$, in the notation of the proof of Theorem 7.2 of \cite{marques-neves-lower-bound}. 

We have found a multiplicity one, $g_i$-minimal hypersurface $\Sigma^{i,(j)}$ such that
$$
\text{index}(\Sigma^{i,(j)})=k, \, \, \, \text{area}_{g_i}(\Sigma^{i,(j)})=W_{g_i}(\sigma_i^{(j)}).
$$

If we let $i\rightarrow \infty$, and because $g$ is bumpy, we can take a subsequential limit of $\Sigma^{i,(j)}$ and get a multiplicity one $g$-minimal hypersurface $\Sigma^{(j)}\in \mathcal{Z}_n(M^{n+1}, \mathbb{Z}_2)$ such that
$$
\text{index}(\Sigma^{(j)})=k, \, \, \, W(\sigma) \leq \text{area}_g(\Sigma^{(j)}) \leq \sup_{x\in X^{(j)}} {\bf M}_{g}(\Phi^{(j)}(x)).
$$
Taking another subsequential limit as $j\rightarrow \infty$, we find a multiplicity one $g$-minimal hypersurface $\Sigma \in \mathcal{Z}_n(M^{n+1}, \mathbb{Z}_2) $ such that
$$
\text{index}(\Sigma)=k, \, \, \,   \text{area}_g(\Sigma) =W(\sigma).
$$

If $s=0$ and $W(\sigma)=0$, then there are representatives of the homology class  $\sigma$ with supremum of the masses arbitrarily small. Then Theorem 3.8 of
\cite{marques-neves-lower-bound} (with $\mathcal{K}=\{0\}$ and $\Psi\equiv 0$)  implies that $\sigma=0$, which gives a contradiction.
This finishes the
proof of the Min-Max Homology Theorem.

 \end{proof}
 

\subsection{Proposition}\label{betti.prop} {\em Suppose that  the metric $g$ is as in Proposition \ref{rational.independence.closed}. Let  $r \in \mathbb{Z}_+$ and $\Sigma \in  \tilde{\mathfrak{M}}_g \cap  \mathcal{Z}_n(M^{n+1}, \mathbb{Z}_2)$ with $\text{index}(\Sigma)=k\leq r$ and $\text{area}(\Sigma)=a$. For every $\varepsilon>0$, there exists $a-\varepsilon < s< a< t < a+\varepsilon$
such that
\begin{eqnarray*}
&&b_i(\mathcal{Z}^{t},\mathcal{Z}^{s}) =0 \, \, \forall \, i \leq r, i\neq k,\\
&&b_k(\mathcal{Z}^{t},\mathcal{Z}^{s}) =1.
\end{eqnarray*}
}
  
  \begin{proof}
  Because the metric $g$ is as in Proposition \ref{rational.independence.closed}, there is $\delta>0$ such that $\Sigma$ is the unique element of 
  $\tilde{\mathfrak{M}}_g$ with index less than or equal to $r$ and area in $(a-\delta,a+\delta)$.

  Let $\sigma  \in H_i(\mathcal{Z}^t,\mathcal{Z}^s)$ with $i \leq r, i\neq k$, $a-\delta < s<t<a+\delta$. If $\sigma \neq 0$, the Homology Min-Max Theorem gives an element
  $\Sigma' \in \tilde{\mathfrak{M}}_g$ with $\text{index}(\Sigma')=i$ and $\text{area}(\Sigma') \in [s,t)$. Contradiction, hence $\sigma=0$. This proves
  $$b_i(\mathcal{Z}^t,\mathcal{Z}^s) =0 \, \, \forall \, i \leq r, i\neq k, \, \, a-\delta < s<t<a+\delta.$$
  

  Consider $\{\eta_i\}_{i=1}^k$ and the functional $A^*$   as in the proof of the Local Min-Max Theorem 6.1 of \cite{marques-neves-lower-bound}. Let $\varepsilon_1$ be as in
  Proposition 6.2 of \cite{marques-neves-lower-bound}. Then for every $S\in \mathcal{Z}_n(M;\mathbb{Z}_2)$ with $\mathcal{F}(S,\Sigma) < \varepsilon_1$, we have $A^*(S)>A^*(\Sigma)$ unless $S=\Sigma$. This  implies that for each $\alpha>0$, there exists $\kappa=\kappa(\alpha, \Sigma,\varepsilon_1)>0$ such that
  if $S\in \mathcal{Z}_n(M;\mathbb{Z}_2)$ satisfies $\mathcal{F}(S,\Sigma) \leq \varepsilon_1/2$ and $A^*(S)\leq A^*(\Sigma)+\kappa$, then
  $\mathcal{F}(S,\Sigma) \leq \alpha$.
 
  Choose $t\in (a,a+\varepsilon)$, with $t-a <\min\{\delta, \kappa(\varepsilon_1/16,\Sigma,\varepsilon_1)\}$. If $\mathcal{K}_1=\{\Sigma\}$, let $\tilde \beta=\beta(\mathcal{K}_1, \frac{t-a}{4})>0$ as in Theorem 3.8
  of \cite{marques-neves-lower-bound}.
  
   Given $\beta>0$, let $\varepsilon_0>0$ and  $\{F_v\}_{v\in \overline{B}^k}$ be as in  Theorem 6.1 of \cite{marques-neves-lower-bound}. We can suppose, by choosing $\beta>0$ sufficiently small,  that
    $$
    {\bf F}((F_v)_{\#}(\Sigma),\Sigma)\leq \min\{\varepsilon_1/4, \tilde{\beta}/4 \}
    $$ for all $v \in \overline{B}^k$. If 
   $$
   P^\Sigma(v) = \sum_{i=1}^k ||(F_v)_{\#}\Sigma||(\eta_i)\cdot e_i,
   $$
   we have $P^\Sigma(0)=0$ and $DP^\Sigma(0)= \text{Id}$. Hence 
   $$
   (P^\Sigma)_*([\partial \overline{B}^k]) \neq 0 \in H_{k-1}(\R^k \setminus \{0\}).
   $$
   We can choose $0<\gamma<\min \{\delta/2,   (t-a)/4, \tilde\beta\}$ such that 
   $$
   \text{area}\left((F_v)_{\#}(\Sigma)\right)< \text{area}(\Sigma)-2\gamma
   $$
   for every $v \in \partial B^k$.
    If
   $$
   \mathcal{K}_2=\{(F_v)_{\#}(\Sigma): v\in \overline{B}^k\},
   $$
   let $0<\beta'=\beta(\mathcal{K}_2, \gamma/4)<\gamma/4$ as in Theorem 3.8 of \cite{marques-neves-lower-bound}.
   
   We choose $0<\overline h<\tilde\beta/4$, depending only on $\Sigma$ and $\{F_v\}$, such that for every $S\in \mathcal{Z}_n(M;\mathbb{Z}_2)$ with ${\bf F}(S,\Sigma)<\overline h$, we have that the function
   $$
   v\in \overline{B}^k \mapsto  ||(F_v)_{\#}(S)||(M)
   $$
   is strictly concave and has a unique maximum point somewhere in $B_{1/2}^k(0)$, and
   $$
   {\bf F}((F_v)_{\#}(S), (F_v)_{\#}(\Sigma)) < \min\{\gamma/4, \beta'/4, \tilde{\beta}/4\}
   $$
   for every $v \in \overline{B}^k$.
  
  If $\overline{\Phi}:\overline{B}^k \rightarrow \mathcal{Z}_n(M;{\bf F};\mathbb{Z}_2)$  is given by 
  $\overline{\Phi}(v) = (F_v)_{\#}(\Sigma)$, we define the relative homology class
  $$
  \overline{\sigma} = \overline{\Phi}_*([\overline{B}^k]) \in H_k(\mathcal{Z}^{t}, \mathcal{Z}^{a-\gamma}),
  $$
  where $[\overline{B}^k]$ denotes the generator of    $H_k(\overline{B}^k, \partial \overline B^k).$
  
  \subsection{Claim} {\em $\overline{\sigma}\neq 0$.}
  \medskip
  
  Suppose $\overline{\sigma}=0$ by contradiction. This implies that there exists a $(k+1)$-dimensional $\Delta$-complex $Y$
  and a map $H: Y \rightarrow \mathcal{Z}^{t}$, continuous in the ${\bf F}$-metric, such that $\partial Y$ is the union of two $k$-dimensional $\Delta$-subcomplexes
  $$
  \partial Y = \overline{B}^k \cup Z,
  $$
  disjoint as subcomplexes, such that
  $$
  H_{|\overline{B}^k} = \overline{\Phi},
  $$
  $$
  \sup_{y\in Z} {\bf M}(H(y)) < a-\gamma.
  $$
  
  We can apply barycentric subdivision to $Y$ (still denoted by $Y$) so that for each $(k+1)$-dimensional simplex $s\in Y$ we have
  $$
  \sup_{y_1,y_2\in s}{\bf F}(H(y_1),H(y_2)) < \varepsilon_1/16.
  $$
  Let $W$ be the union of all $(k+1)$-simplices $s\in Y$ such that there exists $y\in s$ with
  $$
  \mathcal{F}(H(y),\Sigma) \leq \varepsilon_1/4.
  $$
  In particular, $\overline{B}^k \subset \partial W$. Also, 
  $$
  \mathcal{F}(H(y),\Sigma)\leq 3\varepsilon_1/8
  $$
  for every $y\in W$.

 Consider a $k$-simplex $t \in \partial W$ such that $t \notin \overline{B}^k \cup Z$. Then there exists $s\in Y$ with $s\notin W$
 and $t \subset \partial s$. By definition of $W$, we have
 $$
  \mathcal{F}(H(y),\Sigma) >\varepsilon_1/4
 $$
 for every $y\in s$. Hence, for any $y'\in t$ and $y\in s$ we have
 \begin{eqnarray*}
&&\mathcal{F}(H(y'),\Sigma)\geq \mathcal{F}(H(y),\Sigma)-\mathcal{F}(H(y'),H(y))\\
&&> \varepsilon_1/4-\varepsilon_1/16 \geq \varepsilon_1/8.
 \end{eqnarray*}

 We can decompose $\partial W$ as the union of two $k$-dimensional subcomplexes, disjoint as subcomplexes:
 $$
 \partial W = \overline{B}^k \cup B'.
 $$
 Note that $\partial B'=\partial \overline{B}^k$. Consider the continuous map $P: B'\rightarrow \R^k$ given by 
 $$
 P(y) = \sum_{i=1}^k ||H(y)||(\eta_i) \cdot e_i.
 $$
 Note that $P_{|\partial B'} =P^\Sigma_{|\partial \overline{B}^k}$. Therefore $P_*([\partial B'])\neq 0 \in H_{k-1}(\R^k \setminus \{0\})$. This implies the
 existence of $y_0\in B'$ such that $P(y_0)=0$. So
 $$
 A^*(H(y_0))=||H(y_0)||(M)={\bf M}(H(y_0)).
 $$

 This implies 
\begin{equation}\label{a*}
 A^*(H(y_0)) < t<a +\kappa(\varepsilon_1/16,\Sigma, \varepsilon_1).
 \end{equation}
 Since $y_0\in W$, then $\mathcal{F}(H(y_0),\Sigma) \leq 3\varepsilon_1/8<\varepsilon_1/2$ and hence
 $$
 {\bf M}(H(y_0)) = A^*(H(y_0))\geq A^*(\Sigma)= {\bf M}(\Sigma)=a.
 $$
 This gives $y_0\notin Z$. Hence $\mathcal{F}(H(y_0),\Sigma) \geq \varepsilon_1/8.$ On the other hand, inequality (\ref{a*}) gives $\mathcal{F}(H(y_0),\Sigma)\leq \varepsilon_1/16$. Contradiction, thus $\overline{\sigma}\neq 0$ which proves the claim.

 \medskip
 
 \subsection{Claim} {\em $H_k(\mathcal{Z}^{t}, \mathcal{Z}^{a-\gamma})=\{0, \overline{\sigma}\}$.}
  
\medskip

      Let  $\sigma \in H_k(\mathcal{Z}^{t},\mathcal{Z}^{a-\gamma})$, with $\sigma\neq 0$. The Homology Min-Max Theorem gives that $W(\sigma)=\text{area}(\Sigma)$. Like in the proof of the Homology Min-Max Theorem, there exists a homotopy class $\Pi$ of maps
      $\Phi:X \rightarrow \mathcal{Z}^{t}$ relative to $\partial X$, where $X$ is a $k$-dimensional $\Delta$-complex and $\Phi(\partial X)\subset \mathcal{Z}^{a-\gamma}$, such that ${\bf L}(\Pi)=\text{area}(\Sigma)$ and $\Phi_*([X])=\sigma$. Let $\{\Phi_j\}\subset \Pi$ be a minimizing sequence. Because the metric is as in
      Proposition \ref{rational.independence.closed}, the only smooth element of ${\bf C}(\{\Phi_j\})$ is $\Sigma$. Given any  $0<h<\min\{\overline{h}/2, \varepsilon_0/2\}$,
      the proof of  Theorem 7.2 of \cite{marques-neves-lower-bound} gives another minimizing sequence $\{\hat\Phi_j\}\subset \Pi$ (which can be taken continuous in the mass topology) such that the only smooth element
      of ${\bf C}(\{\hat \Phi_j\})$ is $\Sigma$ and for some $\eta>0$ we have
      $$
    x\in X,\,   {\bf M}(\hat \Phi_j(x))\geq a-\eta \implies {\bf F}(|\hat \Phi_j(x)|,|\Sigma|)<h.
      $$
      Because of Theorem 4.11 of \cite{marques-neves-lower-bound}, we can also suppose
      $$
    x\in X,\,   {\bf M}(\hat \Phi_j(x))\geq a-\eta \implies \mathcal{F}(\hat \Phi_j(x),\Sigma)<h.
      $$
      We can assume $\eta <\gamma/2$.

      We fix $j$ and choose  $0<h'<\min\{\eta/8, \gamma/4, \beta'/4\}$ and a barycentric subdivision of $X$ (still denoted by $X$) such that for each $k$-simplex $t \in X$, we have
      $$
      {\bf F}(\hat \Phi_j(x_1), \hat \Phi_j(x_2)) < h'
      $$
      for every $x_1,x_2\in t$. Let $V$ be the union of all $k$-simplices $t\in X$ such that there exists $x\in t$ with
      $$
      {\bf M}(\hat \Phi_j(x))\geq a-\eta/4.
      $$
      Then, for any other $x'\in t$, we have
\begin{eqnarray*}
&&{\bf M}(\hat \Phi_j(x'))\geq {\bf M}(\hat \Phi_j(x))-{\bf F}(|\hat \Phi_j(x')|, |\hat \Phi_j(x)|)\\
&&\geq a-\eta/4-h'\\
&&\geq a-3\eta/8.
       \end{eqnarray*}
       In particular, $V\cap \partial X=\emptyset$. Now, for any $(k-1)$-simplex $s\in \partial V$ there exists a $k$-simplex  $\overline t\notin V$ with $s \subset \partial \overline t$. By definition of $V$, we have:
       \begin{equation}\label{mass.bound}
       {\bf M}(\hat \Phi_j(y))\leq a-\eta/4
       \end{equation}
       for every $y\in \partial V$.


       We will follow as in the proof of Theorem 7.2 of \cite{marques-neves-lower-bound}. For $y \in \partial V$, let $A^y:\overline{B}^k \rightarrow [0,\infty)$ be defined by
       $$
       A^y(v) = ||(F_v)_{\#}(|\hat \Phi_j(y)|)||(M).
       $$
      Note that
      $$
      {\bf F}(\hat \Phi_j(y), \Sigma) < 2h <\overline h
      $$
      for every $y\in V$.
      Hence the function $A^y$ is strictly concave and has a unique maximum at $m(y)\in B^k_{\frac12}(0)$. By Theorem 6.1 of \cite{marques-neves-lower-bound} and inequality (\ref{mass.bound}) above, we get that $m(y)\neq 0$ for every $y\in \partial V$. We can find $\zeta>0$ such that $\zeta\leq |m(y)|<1/2$ for every $y\in \partial V$.

      Now we can consider the one-parameter flow $\{\phi^y(\cdot,t)\}_{t\geq 0} \subset \text{Diff}(\overline{B}^k)$ generated by the vector field
      $$
      v \mapsto -(1-|v|^2)\nabla A^y(v).
      $$
      The function $t \mapsto A^y(\phi^y(v,t))$ is nonincreasing for fixed $v$. It is strictly decreasing except if $v=m(y)$ or if $v\in \partial B^k$, in which cases it is constant. We have that
      $$
      \lim_{t \rightarrow \infty}\phi^y(v,t)\in \partial B^k
      $$
      if $v\neq m(y)$, and the limit is uniform if $|v-m(y)|\geq \zeta$.

      Therefore there exists $s>0$ such that the homotopy
      $$
      H_1: [0,1] \times \partial V \rightarrow \mathcal{Z}_n(M;{\bf F};\mathbb{Z}_2)
      $$
      defined by
      $$
     H_1(t,y)=(F_{\phi^y(0,st)})_{\#}(\hat \Phi_j(y))
      $$
      satisfies:
      \begin{itemize}
      \item $H_1(0,y)=\hat \Phi_j(y)$,
      \item ${\bf F}( H_1(1,y), (F_{w(y)})_{\#}(\hat \Phi_j(y)))<h'$ for some continuous function
      $$
      w:\partial V \rightarrow \partial B^k,
      $$
      \item ${\bf M}(H_1(t,y))\leq a-\eta/8$ 
      \end{itemize}
      for every $(t,y)\in  [0,1] \times \partial V$. 
      We have
      \begin{eqnarray*}
     && {\bf F}(H_1(1,y), (F_{w(y)})_{\#}(\Sigma))\leq  {\bf F}(H_1(1,y), (F_{w(y)})_{\#}(\hat \Phi_j(y)))\\
   &&\hspace{6cm}   + {\bf F}((F_{w(y)})_{\#}(\hat \Phi_j(y)), (F_{w(y)})_{\#}(\Sigma))\\
   &&\leq h'+\beta'/4 \leq \beta'/2
      \end{eqnarray*}     
       for every $y\in \partial V$. In particular,
       \begin{eqnarray*}
      && {\bf M}(H_1(1,y))\leq {\bf M}(F_{w(y)})_{\#}(\Sigma)) + {\bf F}(H_1(1,y), (F_{w(y)})_{\#}(\Sigma))\\
      &&\leq a-2\gamma+\beta'/2\leq a -2\gamma+\gamma/8\leq a-15\gamma/8.
       \end{eqnarray*}
         
          We also have
          \begin{eqnarray*}
         && {\bf F}(H_1(t,y), \Sigma)={\bf F}((F_{\phi^y(0,st)})_{\#}(\hat \Phi_j(y)),\Sigma)\\
         &&\leq {\bf F}((F_{\phi^y(0,st)})_{\#}(\hat \Phi_j(y)),(F_{\phi^y(0,st)})_{\#}(\Sigma))+ {\bf F}((F_{\phi^y(0,st)})_{\#}(\Sigma), \Sigma)\\
         &&\leq \tilde\beta/4+\tilde\beta/4=\tilde\beta/2
          \end{eqnarray*}
            for every $(t,y)\in  [0,1] \times \partial V$.

      By applying Theorem 3.8 of \cite{marques-neves-lower-bound} with $\mathcal{K}=\mathcal{K}_2$, we get that there exists a second homotopy
      $$
      H_2:[1,2] \times \partial V \rightarrow \mathcal{Z}_n(M;{\bf F};\mathbb{Z}_2)
      $$
      such that
      \begin{itemize}
      \item $H_2(1,y)=H_1(1,y),$
      \item $H_2(2,y)=(F_{w(y)})_{\#}(\Sigma)$,
      \item ${\bf F}(H_2(t,y),(F_{w(y)})_{\#}(\Sigma))\leq \gamma/4$
      \end{itemize}
      for every $(t,y)\in [1,2]\times \partial V$. In particular,
      \begin{eqnarray*}
      &&{\bf F}(H_2(t,y),\Sigma)\leq {\bf F}(H_2(t,y),(F_{w(y)})_{\#}(\Sigma))+ {\bf F}((F_{w(y)})_{\#}(\Sigma), \Sigma)\\
      &&\leq \gamma/4+\tilde\beta/4\leq \tilde\beta/2,
      \end{eqnarray*}
      and
      \begin{eqnarray*}
     && {\bf M}(H_2(t,y))\leq {\bf M}((F_{w(y)})_{\#}(\Sigma)) + {\bf F}(H_2(t,y),(F_{w(y)})_{\#}(\Sigma))\\
      &&\leq a-2\gamma +\gamma/4\leq a-7\gamma/4
      \end{eqnarray*}
      for every $(t,y)\in [1,2]\times \partial V$.

      
      We take $Q$ to be the cone over $\partial V$ and $\hat w: Q\rightarrow \overline B^k$ to be a continuous map that
     sends the vertex of the cone to $0$ and such that $\hat w(y)=w(y)$ for every $y\in \partial V=\partial Q$.
     
      
      Consider the following $\Delta$-complex:
      \begin{eqnarray*}
      C = V \cup ([0,2]\times \partial V) \cup Q,
      \end{eqnarray*}
      so $\partial C=0.$ Define the continuous map
      $$
      \Psi: C \rightarrow \mathcal{Z}_n(M;{\bf F};\mathbb{Z}_2)
      $$
      by
      $$
      \Psi(x)=\hat\Phi_j(x)
      $$
      for $x\in V$,
      $$
      \Psi(t,y) = H_1(t,y)
      $$
      for $(t,y)\in [0,1] \times \partial V$,
      $$
      \Psi(t,y) = H_2(t,y)
      $$
      for $(t,y)\in [1,2] \times \partial V$, and
      $$
      \Psi(q) = (F_{\hat w(q)})_{\#}(\Sigma)
      $$
      for $q\in Q$. Then
      $$
      {\bf F}(\Psi(p),\Sigma) \leq \tilde\beta/2
      $$
      for every $p\in C$. 
      
      By applying Theorem 3.8 of \cite{marques-neves-lower-bound} with $\mathcal{K}=\mathcal{K}_1=\{\Sigma\}$, there exists a homotopy
      $$
      H_3:[0,1] \times C \rightarrow  \mathcal{Z}_n(M;{\bf F};\mathbb{Z}_2)
      $$
      with
      \begin{itemize}
      \item $H_3(0,p)=\Psi(p)$,
      \item $H_3(1,p)=\Sigma$,
      \item ${\bf F}(H_3(t,p),\Sigma)\leq (t-a)/4$
      \end{itemize}
      for every $(t,p)\in [0,1]\times C$. In particular,
     \begin{eqnarray*}
     && {\bf M}(H_3(t,p))\leq {\bf M}(\Sigma) + {\bf F}(H_3(t,p),\Sigma)\leq a + (t-a)/4=t-\frac34(t-a)\\
    && <t
    \end{eqnarray*}
    for every $(t,p)\in [0,1]\times C$. Hence
    $$
    \Psi_*([C]) =0 \in H_k(\mathcal{Z}^t, \mathcal{Z}^{a-\gamma}).
    $$

     Now consider the $\Delta$-complex
     $$
     X' = (X\setminus V)  \cup ([0,2]\times \partial V) \cup Q,
     $$ 
     and the map
     $$
     \Psi':X' \rightarrow \mathcal{Z}_n(M;{\bf F};\mathbb{Z}_2)
     $$
     given by
     $$
     \Psi'(x)=\hat\Phi_j(x)
     $$
     for every $x\in X\setminus V$, and
     $$
     \Psi'(p)=\Psi(p)
     $$
     for every $p\in ([0,2]\times \partial V) \cup Q$. So 
     $$
     \Psi'_*([X'])=\sigma \in  H_k(\mathcal{Z}^t, \mathcal{Z}^{a-\gamma}).
     $$
     Note that
     ${\bf M}(\Psi'(z))\leq a-\eta/8$ for every $z\in X'\setminus Q$ and ${\bf M}(\Psi'(p))\leq a-2\gamma$ for every $p\in \partial Q$. 
     
     Let $\Pi'$ be the homotopy class of the map
     $$
     \Psi'_{|(X'\setminus Q)}: X'\setminus Q\rightarrow \mathcal{Z}^{a-\eta/16}
     $$      
     relative to the boundary $\partial (X'\setminus Q) =\partial X  \cup \partial Q$. Note that
     $$
     \Psi'(\partial (X'\setminus Q)) \subset \mathcal{Z}^{a-\gamma}.
     $$
     By applying min-max theory  for the area functional to $\Pi'$, and using that there is no minimal hypersurface with index less than or equal to $k$ and area in $[a-\gamma,a-\eta/16)$, we get that there exists $\Psi''\in \Pi'$ with
     $$
     {\bf M}(\Psi''(z))< a-\gamma
     $$
     for every $z\in X'\setminus Q$. This implies
     $$
     \Psi_*([Q]) =\sigma \in H_k(\mathcal{Z}^t, \mathcal{Z}^{a-\gamma}).
     $$
    
     Recall that $\overline  \Phi:\overline B^k \rightarrow \mathcal{Z}(M;{\bf F};\mathbb{Z}_2)$ is defined by $\overline \Phi(v)=(F_v)_{\#}(\Sigma)$ and $\overline\sigma = \overline \Phi_*([\overline B^k])\in H_k(\mathcal{Z}^t, \mathcal{Z}^{a-\gamma})$. Then
     $$
     \Psi_{|Q}=\overline \Phi \circ \hat w,
     $$
where $\hat w:(Q,\partial Q)\rightarrow  (\overline B^k, \partial B^k)$ and $\overline \Phi:(\overline B^k, \partial B^k)\rightarrow (\mathcal{Z}^t, \mathcal{Z}^{a-\gamma})$.

Hence
$$
\sigma = \Psi_*([Q]) = \overline \Phi_*(d),
$$
with $d=\hat w_*([Q]) \in H_k(\overline{B}^k,\partial B^k)=\mathbb{Z}_2$. Since $\sigma \neq 0$, we have that $d=[\overline B^k]$. Hence
$$
\sigma=\overline{\sigma},
$$
which finishes the proof of the claim and of the proposition.

  \end{proof}
  
  We can now prove Theorem \ref{theorem.closed.2} for Riemannian metrics as in Proposition \ref{rational.independence.closed}.  Suppose $g$ is such a metric.
  Fix $r\in \mathbb{Z}_+$.   
  Let $\{\Sigma_1, \dots, \Sigma_q\}$ be the collection of elements of $\tilde{\mathcal{M}}_g \cap \mathcal{Z}_n(M^{n+1}, \mathbb{Z}_2)$ with area
  less than $a$ and Morse index less than or equal to $r$. We can suppose that they are ordered so that
  $$
  \text{area}(\Sigma_q) < \text{area}(\Sigma_{q-1}) < \cdots < \text{area}(\Sigma_1).
  $$
  
  Set $a_i=\text{area}(\Sigma_i)$ and $m_i=\text{index}(\Sigma_i)$. Proposition \ref{betti.prop} implies that we can choose $s_i<a_i<t_i$ for every $1\leq i\leq q$ such that
  $t_i<s_{i-1}$ for all $2\leq i \leq q$, $0<s_q<t_1<a$ and
  \begin{eqnarray*}
&&b_j(\mathcal{Z}^{t_i},\mathcal{Z}^{s_i}) =0 \, \, \forall \, j \leq r, j\neq m_i,\\
&&b_{m_i}(\mathcal{Z}^{t_i},\mathcal{Z}^{s_i}) =1.
\end{eqnarray*}
The Homology Min-Max Theorem \ref{homology.minmax} implies
$$
b_j(\mathcal{Z}^{a}, \mathcal{Z}^{t_1}) =0 \, \, \forall \, j \leq r,
$$
$$
b_j(\mathcal{Z}^{s_{i-1}}, \mathcal{Z}^{t_i}) =0 \, \, \forall \, j \leq r, \, \, 2\leq i \leq q,
$$
and
$$
b_j(\mathcal{Z}^{s_q}, \mathcal{Z}^0=\emptyset) =0 \, \, \forall \, j \leq r.
$$

The Betti numbers with coefficients in $\mathbb{Z}_2$ are subadditive, 
meaning that if $X \supset Y \supset Z$ are topological spaces, then
$$
b_k(X,Z) \leq b_k(X,Y)+b_k(Y,Z).
$$
Hence
\begin{eqnarray*}
b_k(\mathcal{Z}^a)&\leq& b_k(\mathcal{Z}^a,\mathcal{Z}^{t_1})+\sum_{i=1}^q b_k(\mathcal{Z}^{t_i},\mathcal{Z}^{s_i})+\sum_{i=2}^q b_k(\mathcal{Z}^{s_{i-1}}, \mathcal{Z}^{t_i}) +b_k(\mathcal{Z}^{s_q})\\
&=& c_k(a) <\infty.
\end{eqnarray*}
Therefore $b_k(a)<\infty$ for every $k\leq r$.

The function 
$$
s_k(X,Y) = b_k(X,Y)-b_{k-1}(X,Y) +\cdots +(-1)^kb_0(X,Y)
$$
is also subadditive. 
Hence
\begin{eqnarray*}
s_k(\mathcal{Z}^a)&\leq& s_k(\mathcal{Z}^a,\mathcal{Z}^{t_1})+\sum_{i=1}^q s_k(\mathcal{Z}^{t_i},\mathcal{Z}^{s_i})+\sum_{i=2}^q s_k(\mathcal{Z}^{s_{i-1}}, \mathcal{Z}^{t_i}) +s_k(\mathcal{Z}^{s_q})\\
&=& \sum_{i=1}^q s_k(\mathcal{Z}^{t_i},\mathcal{Z}^{s_i}) =\sum_{1\leq i\leq q, m_i\leq k} (-1)^{m_i-k}\\
&=& c_k(a)-c_{k-1}(a)+ \cdots +(-1)^kc_0(a).
\end{eqnarray*}
Therefore
$$
b_k(a)-b_{k-1}(a)+ \cdots +(-1)^kb_0(a)\leq c_k(a)-c_{k-1}(a)+ \cdots +(-1)^kc_0(a)
$$
for every $k\leq r$. Since $r\in\mathbb{Z}_+$ is arbitrary, this establishes the Theorem for metrics as in Proposition \ref{rational.independence.closed}.

Let $g$ be a bumpy metric.  Fix $r \in \mathbb{Z}_+$. There are finitely many elements of $\tilde{\mathfrak{M}}_g$ with area less than or equal to $(a+1)$ and Morse index less than or equal to $(r+1)$.  We can  suppose that there is no element of $\tilde{\mathfrak{M}}_{g}$ with index less than or equal to $(r+1)$ and area in $[a-2\delta,a)$ for some $\delta>0$.  Let  $\{g_i\}_i$ be a sequence of metrics as in Proposition \ref{rational.independence.closed} such that 
$g_i$ converges to $g$ in the smooth topology. For sufficiently large $i$,  there is no element of $\tilde{\mathfrak{M}}_{g_i}$ with index less than or equal to $(r+1)$ and area in $[a-2\delta,a-\delta]$. We are using Sharp's compactness theorem \cite{sharp}.



  Let  $\Phi:(X,\partial X) \rightarrow (\mathcal{Z}_g^a,\mathcal{Z}_{g_i}^{a-\delta})$ be  a continuous map, such that $\text{dim}(X)=q \leq (r+1)$, where $i$ is sufficiently large.
  Of course $\mathcal{Z}_{g_i}^{a-\delta} \subset \mathcal{Z}_g^a$ for sufficiently large $i$. We can consider $\Pi_{i,\partial}$ the homotopy class of $\Phi_{|\partial X}$ among all maps taking values in $\mathcal{Z}_{g_i}^{a-\delta}$. Min-max theory in the metric $g_i$ applied to $\Pi_{i,\partial}$, with upper Morse index bounds, gives that there exists $\Psi_\partial \in \Pi_{i,\partial}$ such that $\Psi_\partial (\partial X)\subset \mathcal{Z}_{g_i}^{a-2\delta}$. This means that if we are interested in $\Phi_*([X])\in H_q(\mathcal{Z}_g^a,\mathcal{Z}_{g_i}^{a-\delta})$, we can suppose  $\Phi(\partial X) \subset \mathcal{Z}_{g_i}^{a-2\delta}$. Let $\Pi$ be  the homotopy class of $\Phi$ relative to $\partial X$, among maps that take values in $\mathcal{Z}_g^a$. Min-max theory in the $g$ metric applied to $\Pi$, with upper Morse index bounds, implies the
  existence of $\Psi\in\Pi$ such that $\Psi(X)\subset \mathcal{Z}_g^{a-3\delta/2}$. 
  
  Since $\mathcal{Z}_g^{a-3\delta/2}\subset \mathcal{Z}_{g_i}^{a-\delta}$ for sufficiently large $i$, we conclude that
 $$\Phi_*([X])=0\in H_q(\mathcal{Z}_g^a,\mathcal{Z}_{g_i}^{a-\delta}).$$
  Therefore
  $$
  H_q(\mathcal{Z}_g^a,\mathcal{Z}_{g_i}^{a-\delta})=0
  $$
  for every $q\leq (r+1)$. The long exact homology sequence of the pair $(\mathcal{Z}_g^a,\mathcal{Z}_{g_i}^{a-\delta})$ implies that
  $$
  H_k(\mathcal{Z}_g^a)=H_k(\mathcal{Z}_{g_i}^{a-\delta})
  $$
  for every $k\leq r.$ In particular, $b_k(\mathcal{Z}_g^a)=b_k(\mathcal{Z}_{g_i}^{a-\delta})<\infty$ for every $k\leq r$.
  
  Notice that the way we constructed $g_i$ implies that for sufficiently large $i$ we have 
  $$
  c_{k,g}(a) = c_{k,g_i}(a-\delta)
  $$
  for every $k\leq r$. We are using Sharp's compactness theorem and the implicit function theorem.
  Since we have already proved that 
  \begin{eqnarray*}
&&b_k(\mathcal{Z}_{g_i}^{a-\delta})-b_{k-1}(\mathcal{Z}_{g_i}^{a-\delta})+ \cdots +(-1)^kb_0(\mathcal{Z}_{g_i}^{a-\delta})\\
&&\hspace{1cm} \leq c_{k,g_i}(a-\delta)-c_{k-1,g_i}(a-\delta)+ \cdots +(-1)^kc_{0,g_i}(a-\delta)
\end{eqnarray*}
for every $k\leq r$, we get 
$$
b_k(a)-b_{k-1}(a)+ \cdots +(-1)^kb_0(a)\leq c_k(a)-c_{k-1}(a)+ \cdots +(-1)^kc_0(a)
$$
for every $k\leq r$. Since $r\in \mathbb{Z}_+$ is arbitrary, we have finished the proof of Theorem \ref{theorem.closed.2}.

\end{proof}

\section{Boundary case and Compactness theory}

Let $(M^{n+1},g)$ be an $(n+1)$-dimensional compact, connected, Riemannian manifold with strictly convex boundary. We can suppose it is isometrically embedded in some Euclidean space $\mathbb{R}^L$. Let $\gamma^{n-1}\subset \partial M$ be an $(n-1)$-dimensional smooth, closed submanifold.

We denote by  $\mathcal{V}_n(M)$ the closure,  in the weak topology, of the space of $n$-dimensional rectifiable varifolds in $\mathbb{R}^L$ with support contained in $M$. Given $V\in \mathcal{V}_n(M)$, $||V||$ denotes the Radon measure in $M$ associated with $V$.  

We denote by  $A(p,s,r)$ the Euclidean annulus $\{x\in \R^L: s<|x-p|<r\}$.


The varifold $V$ is said to be $\gamma$-stationary if 
$$
\delta V(X)=0
$$
for every vector field $X$ that is tangential to $M$ and such that $X=0$ on $\gamma$.

Let
\begin{eqnarray*}
\mathfrak{M}(\gamma) = \{ \Sigma^n \subset M : \Sigma \text{ is a compact, embedded, smooth minimal } \\
\text{hypersurface, with } \partial \Sigma = \gamma\}.
\end{eqnarray*}
If we want to emphasize the dependence on the Riemannian metric $g$, we will write $\mathfrak{M}_g(\gamma)$ instead of $\mathfrak{M}(\gamma)$.

\subsection{Definition}
The Morse index of $\Sigma$ is defined as the number $\text{index}(\Sigma)$ of negative eigenvalues of the Jacobi operator $L_\Sigma$ of $\Sigma$, counted with multiplicities, acting on the space of smooth sections of the normal bundle of $\Sigma$ which vanish on $\partial \Sigma$.


We will extend to the case of minimal hypersurfaces with fixed boundary the compactness theory for closed minimal hypersurfaces developed by Sharp \cite{sharp}. 

Let
\begin{eqnarray*}
\mathfrak{M}_I(\gamma, \Lambda) =  \{ \Sigma \in  \mathfrak{M}(\gamma) : \Sigma \text{ connected}, \text{index}(\Sigma) \leq I, \text{ and } ||\Sigma||(M)\leq \Lambda\}
\end{eqnarray*}

The main result of this section is the following theorem.

\begin{thm}\label{strong-compactness1}
Consider $\{\Sigma_k\}_{k\geq 1} \subset \mathfrak{M}_I(\gamma, \Lambda)$. There exists a subsequence $\{\Sigma_k\}_{k}$ and $\Sigma \in \mathfrak{M}_I(\gamma, \Lambda)$ such that $\Sigma_k \rightarrow \Sigma$ as varifolds. The convergence is graphical and smooth with multiplicity one everywhere. If the $\Sigma_k$ are different from $\Sigma$, then the nullity of $\Sigma$ is at least one.

\end{thm}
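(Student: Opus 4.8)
The plan is to follow the strategy of Sharp's compactness theorem \cite{sharp}, adapting each step to the boundary situation, with the strict convexity of $\partial M$ playing the key role in ruling out concentration of area near $\gamma$. First I would establish uniform local area bounds: since each $\Sigma_k$ is minimal, monotonicity of the mass ratio gives, for interior points, a uniform upper bound on the density ratio in terms of $\Lambda$; near $\gamma$, the hypersurface $\Sigma_k$ meets a fixed convex hypersurface (the boundary, or a slightly interior convex one) along $\gamma$, and the strict convexity together with a boundary monotonicity formula (in the spirit of Grüter--Jost) yields uniform density bounds up to the boundary. Consequently $\{\Sigma_k\}$, viewed as varifolds, is precompact in the weak topology; pass to a subsequence converging to a varifold $V$. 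By Allard's interior regularity theorem and its boundary version (again using that $\gamma$ lies in a strictly convex hypersurface, so that $\partial M$ acts as a barrier and no interior singularities can escape to $\gamma$), $V$ is a stationary integral varifold whose support is a smooth minimal hypersurface with boundary $\gamma$ away from a possible singular set.

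The next step is to eliminate the singular set and prove the convergence is smooth with multiplicity one. Here I would combine three ingredients exactly as in \cite{sharp}: (i) the uniform index bound $\mathrm{index}(\Sigma_k)\le I$ implies that, outside a finite set of at most $I$ points, the $\Sigma_k$ are stable on small balls; (ii) the Schoen--Simon curvature estimates for stable minimal hypersurfaces (valid since $3\le n+1\le 7$) give uniform $|A|$ bounds away from those points, hence smooth graphical subconvergence there, with some integer multiplicity $m$; (iii) a removable singularity / point-removal argument handles the finitely many concentration points, and Allard's theorem upgrades the convergence near $\gamma$ to smooth up to the boundary. To see $m=1$: if $m\ge 2$ then on the stable region $\Sigma_k$ would be a multi-sheeted graph over $\Sigma$, and the difference of two sheets, after rescaling, converges to a positive Jacobi field on $\Sigma$ vanishing on $\gamma$; but a positive solution of the Jacobi equation vanishing on the boundary forces $\Sigma$ to be the first eigenfunction situation, contradicting that $\Sigma$ arises as a limit with $\Sigma_k$ on both sides — more precisely, as in \cite{sharp}, one shows a sheet of $\Sigma_k$ would be strictly on one side and the maximum principle (Jacobi field positive, vanishing on $\gamma$) is incompatible with $\Sigma_k\to\Sigma$ unless $\Sigma_k=\Sigma$. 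This simultaneously yields the last sentence: if $\Sigma_k\ne\Sigma$ infinitely often, the normalized difference $u_k=(\Sigma_k-\Sigma)/\|\Sigma_k-\Sigma\|$ subconverges to a nonzero Jacobi field $u$ on $\Sigma$ vanishing on $\gamma$, so $\mathrm{nullity}(\Sigma)\ge 1$.

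Finally I would check that $\Sigma\in\mathfrak{M}_I(\gamma,\Lambda)$, i.e. the index and area bounds pass to the limit. The area bound $\|\Sigma\|(M)\le\Lambda$ is immediate from weak convergence and multiplicity one. For the index, smooth graphical convergence on compact subsets of $\Sigma\setminus\{$finitely many points$\}$ plus a standard argument (a negative direction for $\Sigma$ is supported in a compact piece, transplants to a negative direction for $\Sigma_k$ for large $k$, all vanishing on $\gamma$) shows $\mathrm{index}(\Sigma)\le\liminf_k\mathrm{index}(\Sigma_k)\le I$; connectedness is preserved because the convergence is graphical with multiplicity one, so $\Sigma$ cannot split.

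\medskip

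The main obstacle I anticipate is the behavior at the fixed boundary $\gamma$: one must show that no area concentrates there, that $V$ has no singularities on $\gamma$, and that the graphical convergence extends smoothly up to $\gamma$. This is where the hypothesis that $\gamma\subset\partial M$ with $\partial M$ strictly convex is essential — it provides the barrier needed for a boundary monotonicity formula and for Allard's boundary regularity theorem (as developed by Grüter--Jost and used in \cite{delellis-tasnady}, \cite{montezuma}). The interior analysis is a routine transcription of \cite{sharp}; the boundary analysis is the genuinely new technical content of the section.
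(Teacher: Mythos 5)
Your overall skeleton --- area bounds, varifold subconvergence, index bound giving at most $I$ bad points, Schoen--Simon stability estimates away from them, Allard regularity up to $\gamma$, then pass index/area to the limit and extract a Jacobi field when $\Sigma_k\neq\Sigma$ --- matches the paper. But the two places you flag as ``routine transcription'' are exactly where the paper does something different, and your version of the multiplicity-one step does not close.

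\textbf{Multiplicity one.} You propose: if $m\geq 2$ on the stable region, two sheets give a positive Jacobi field vanishing on $\gamma$, ``a first eigenfunction situation,'' contradiction. As written this is not a contradiction: a positive Jacobi field vanishing on $\gamma$ is precisely nullity $\geq 1$, which the theorem explicitly allows. Moreover, near a generic point of $\gamma$ the De Lellis--Ramic boundary compactness already forces one-sheeted graphical convergence, so if multiplicity were $\geq 2$ on some interior piece the multi-sheeted graph could not extend to $\gamma$; your Jacobi field would not actually vanish on $\gamma$, and the argument becomes murky. The paper avoids this entirely by a connectivity argument: after removing the bad set $\tilde Y$, it shows $\Sigma\setminus\tilde Y$ is connected. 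The key lemma is that at a boundary concentration point $x\in\tilde Y\cap\gamma$ only one component of $\Sigma\setminus\tilde Y$ can have $x$ in its closure --- first because only one component contains $\gamma\cap(B_r(x)\setminus\{x\})$ (trivial in dimensions $n+1\geq 4$, and a short intersection-with-a-small-sphere argument when $n+1=3$), and second because any other component would be a minimal hypersurface in $B_r(x)\setminus\{x\}$ touching the strictly convex $\partial M$ only at $x$, violating White's maximum principle. Once $\Sigma\setminus\tilde Y$ is connected, the multiplicity-one near $\gamma$ propagates globally because the multiplicity is locally constant on the regular part. This is the genuinely new argument, and it sidesteps the Jacobi-field route entirely.

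\textbf{Removing the boundary concentration points.} You describe this as ``a removable singularity / point-removal argument.'' Interior concentration points are in fact dispatched by citing Sharp directly, so the only real work is at $Y=\tilde Y\cap\gamma$. There the paper argues about tangent cones at $y\in Y$: by Montezuma's Proposition 6.3 every tangent cone is a sum of half-planes $\sum c_i\pi_i$ containing $T_y\gamma$; by the analogue of Sharp's punctured-ball stability argument, $\Sigma$ is stable in $B_\varepsilon(y)\setminus\{y\}$ for vector fields vanishing on $\gamma$; a rescaled De Lellis--Ramic compactness (their Theorem 8.4, with varying metrics) then shows the tangent cone is a single multiplicity-one half-plane, and Allard boundary regularity finishes. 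Your proposal does not engage with the half-plane/tangent-cone structure, and this is precisely the step that is not covered by the closed-case literature. (Also a small attribution point: the boundary monotonicity and regularity used here are Allard's boundary regularity and De Lellis--Ramic, not Gr\"uter--Jost, whose setting is the free boundary case.)

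The rest --- area bound and index bound passing to the limit, nullity $\geq 1$ from the normalized difference when $\Sigma_k\neq\Sigma$ --- is correct and matches the paper.
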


\begin{proof}
We will start by proving the following claim:

\subsection{Claim}\label{claim1}
{\em There exists a subsequence $\{\Sigma_k\}_{k}$, an integral varifold $V$, and a finite subset $Y \subset M$ such that:
\begin{enumerate}
\item[(a)] $\Sigma_k \rightarrow V$ as varifolds,

\item[(b)] $V$ is $\gamma$-stationary,

\item[(c)] $Y\subset \gamma$ and $\#Y \leq I$,

\item[(d)] $\Sigma = supp(||V||)$ is connected, smooth in $M\setminus Y$,  and contains $\gamma$,

\item[(e)] $\Sigma_k\rightarrow \Sigma$ in the Hausdorff topology,

\item[(f)] $\Sigma_k \rightarrow \Sigma$ smooth and graphically, with multiplicity one on compact subsets of $M\setminus Y$, and

\item[(g)] $\Sigma\setminus Y$ is connected.
  
\end{enumerate} }

Let $\{\Sigma_k\}_{{k\geq 1}}$ be a sequence in $\mathfrak{M}_I(\gamma, \Lambda)$. Since $||\Sigma_k||(M)=\H^n(\Sigma_k)$ are bounded from above by $\Lambda$, we have that, up to a subsequence, $\Sigma_k$ converges in varifold sense to a varifold $V \in \mathcal{V}_n(M)$. Clearly, $V$ is $\gamma$-stationary. Moreover, minimality of  $\Sigma_k$ implies
\begin{equation}
||\delta (|\Sigma_k|) || \leq \H^{n-1}\llcorner \gamma +n ||A_M|| \H^{n}\llcorner \Sigma_k,
\end{equation}
where $|\Sigma_k|$ denotes the varifold induced by $\Sigma_k$ and $||A_M||$ denotes the $C^0$ norm of the second fundamental form of the inclusion $M \subset \R^L$. In particular, the first variation total measures $||\delta (|\Sigma_k|) ||(M)$ are also uniformly bounded. By Allard's Compactness Theorem for integral varifolds, (see 42.7 and 42.8  in \cite{simon}), $V$ is an  integral varifold.

Let $\Sigma = supp(||V||)$. By the Maximum Principle (see Proposition 7.1 of \cite{delellis-ramic}), $\Sigma \cap \partial M \subset \gamma$. We claim that $\Sigma$ contains $\gamma$, is connected, and $\Sigma_k\rightarrow \Sigma$ in the Hausdorff sense. Indeed, if $p \in \gamma$, the monotonicity formula at boundary points, obtained by Allard in \cite{allard-boundary}, implies a positive lower bound $\H^n(\Sigma_k\cap B_r(p))\geq Cr^n >0$ which is uniform on $k\in \N$ for small values of $r>0$. Then, by  continuity of the mass under varifold convergence, it follows that $p \in \Sigma$.  The Hausdorff convergence follows because if not there would be a sequence of points $p_{k_i} \in \Sigma_{k_i}$, with $k_i\rightarrow \infty$, at distance at least a fixed $d_1>0$ from $\Sigma$. Since $\gamma \subset \Sigma$, the interior monotonicity formula can be applied to give us a contradiction. Since $\Sigma_k$ are connected, $\Sigma$ is connected also.

Since $\text{index}(\Sigma_k) \leq I$ for every $k$, a standard argument shows that after maybe passing to a subsequence there exists a finite set $\tilde{Y}\subset M$, with $\# \tilde{Y} \leq I$, such that for every $x\in M\setminus \tilde{Y}$ there exists $r>0$ such that $\Sigma_k$ is stable  in $B_r(x)$ for every $k$. If $x\in M \setminus \partial M$, we can choose $r$ so that $B_r(x) \cap \partial M = \emptyset$. The Schoen-Simon theory \cite{schoen-simon} implies that $\Sigma$ is smooth and embedded  in $M \setminus (\partial M \cup \tilde{Y})$, perhaps with integer multiplicities,  and the convergence is locally graphical and smooth. Since $\partial M$ is strictly convex, the wedge property is satisfied near any $x \in \gamma$ (Lemma 9.1 of \cite{delellis-ramic}). Hence we can apply
 De Lellis and Ramic's Compactness Theorem (Theorem 7.4 of  \cite{delellis-ramic}) to obtain that for any $x \in \gamma \setminus \tilde{Y}$, there exists $r>0$ such that $\Sigma \cap B_r(x)$ is smooth and embedded with multiplicity one, $\partial (\Sigma \cap B_r(x)) = \gamma \cap B_r(x)$, with smooth and locally graphical convergence.  
 
 By Sharp \cite{sharp}, $\Sigma$ is smooth in $M\setminus \partial M$. In particular, for every $x \in \tilde{Y}\setminus \partial M$ only one connected component of $\Sigma\setminus \tilde{Y}$ has $x$ on its closure. 

 We claim now that the same holds even for $x\in \tilde{Y} \cap \gamma$. To see this, we start by observing that only one connected component of $\Sigma\setminus \tilde{Y}$ contains $\gamma\cap (B_r(x)\setminus \{x\})$, for some $r>0$. This is trivial when $4\leq (n+1)\leq 7$, since in this case $\gamma$ has dimension at least $2$, which implies that $\gamma\cap (B_r(x)\setminus \{x\})$ is connected. If $(n+1)=3$, choose a small $r>0$ such that $\partial B_r(x)$ is transversal to $\Sigma$. Then $\Sigma \cap \partial B_r(x)$ is a 1-dimensional compact submanifold  with exactly two boundary points, corresponding to $\gamma\cap \partial B_r(x)$. Therefore these points belong to the same component of $\Sigma\setminus \tilde{Y}$. We conclude our claim that no other component of $\Sigma\setminus \tilde{Y}$ has $x$ on its closure by contradiction; otherwise, such a component would be minimal in $B_r(x)\setminus \{x\}$ and touch $\partial M$ at $x$ only, which contradicts the maximum principle of White \cite{white-maximum-principle} since $\partial M$ is convex.

 In particular, $\Sigma\setminus \tilde{Y}$ is connected and hence has multiplicity one. By Allard's regularity theory, in the interior and  boundary cases (\cite{allard, allard-boundary}), the convergence $\Sigma_k\rightarrow V$ is smooth and graphical with a single sheet on compact subsets of $M\setminus Y$, where $Y=\tilde{Y} \cap \partial M$. We also know that $\Sigma$ is smooth in $M\setminus Y$ and in $M\setminus \partial M$. This proves Claim \ref{claim1}.
 
\subsection{Claim}\label{remove-bdry-sing}
{\em $\Sigma$ is smooth, embedded  and $ind(\Sigma)\leq I$. 
The convergence $\Sigma_k \rightarrow \Sigma$ is graphical and smooth with one sheet on $M$.  If $\Sigma_k\neq \Sigma$ for sufficiently large $k$, the nullity of $\Sigma$ is at least one.}
\medskip

We follow the notation from Claim \ref{claim1}. In order to prove regularity at $y\in Y$, it suffices to show that any tangent cone of $V$ at $y$   is an $n$-dimensional half-space with multiplicity one. The result would follow from Allard's boundary regularity theorem in \cite{allard-boundary}. We start by observing that Claim \ref{claim1} and Proposition 6.3 of \cite{montezuma} imply that any such $C$ can be represented as a sum $C = \sum_{i=1}^l c_i \pi_i$, where $\{\pi_i\}_{i=1}^l$ and $\{c_i\}_{i=1}^l$ are collections of $n$-dimensional half-spaces that contain $T_y\gamma$ and positive integers, respectively.

Observe now that the argument of Claim 2 on page 326 of \cite{sharp} can also be applied at boundary points. In particular, there exists $\varepsilon>0$ such that $\Sigma$ is stable in $B_{\varepsilon}(y)\setminus \{y\}$, for every $y \in Y$, with respect to vector fields satisfying $X|_{\gamma}=0$. This is possible because we still have that $\delta^2 \Sigma_k(X)\rightarrow \delta^2 V (X)$, for compactly supported $C^1$ vector fields $X$ that vanish along $\gamma$.

Let $r_1 > r_2 > \cdots >0$ be such that $r_j\rightarrow 0$, and $(\eta_{y, r_j})_{\#}V \rightarrow C$, as $j\rightarrow \infty$. It follows from the Compactness Theorem 8.4 for stable minimal hypersurfaces of De Lellis and Ramic \cite{delellis-ramic}, slightly modified to allow for varying metrics and boundaries, that a subsequence of $(\eta_{y, r_j})_{\#}V \llcorner A(O,1,2)$ converges graphically and smoothly to $W$ with $\partial W=T_y\gamma$. Since each component of $W$ that intersects $T_y(\partial M)$ must have multiplicity one, and $W=C=\sum_{i=1}^l c_i \pi_i$, we conclude that $C$ is a single half-space with multiplicity one.  This proves the regularity of $\Sigma$ at points $y \in Y$.

By Allard's regularity theorem for boundary points (\cite{allard-boundary}), we obtain that $\Sigma_k \rightarrow \Sigma$ graphically and smoothly with one sheet on $M$. In particular, ${\rm index}(\Sigma) \leq I$. If $\Sigma_k\neq \Sigma$ for sufficiently large $k$, and since the convergence is smooth with multiplicity one, a standard argument gives a nontrivial Jacobi field. This finishes the proof of Claim \ref{remove-bdry-sing}.

 \end{proof}
 
 \subsection{Remark}
 The compactness statement of Theorem \ref{strong-compactness1}, with the exception of the nullity conclusion, holds true in the case of varying and converging Riemannian metrics $g_k\rightarrow g$ and boundaries
 $\gamma_k\rightarrow \gamma$. The proof follows by simple modifications (see Remark 7.4 of \cite{delellis-ramic}).
 
 \section{Morse inequalities in the boundary case}
 
 Let $3\leq (n+1)\leq 7$. We suppose that $\partial M$ is strictly convex and that $M$ contains no closed, embedded, smooth minimal hypersurfaces. Let $\gamma^{n-1}\subset \partial M$ be an $(n-1)$-dimensional smooth, closed submanifold.
By Theorem 2.1 of White \cite{white-isoperimetric}, there exists a constant $c>0$ such that
 $$
 \text{area}_n(\Sigma) \leq c \left(\text{area}_{n-1}(\partial \Sigma)+ \int_\Sigma |H_\Sigma| \, d\Sigma\right)
 $$
 for any compact hypersurface $\Sigma \subset M$.
  In particular, there exists a constant $\Lambda>0$ such that
 $$
 ||\Sigma||(M)\leq \Lambda
 $$
 for any compact minimal hypersurface $\Sigma \subset M$ with $\partial \Sigma=\gamma$. 
 
 Let us assume that every compact minimal hypersurface $\Sigma \subset M$ with $\partial \Sigma=\gamma$ is nondegenerate, meaning that it admits no nontrivial
 Jacobi fields vanishing on $\gamma$. This condition holds for Baire generic boundaries $\gamma \subset \partial M$ (\cite{white-parametric}, and Section 7.1 of \cite{AmbCarSha}).
 
 Given $k\in \mathbb{Z}_+$, let $c_k(\gamma)$ denote the number of embedded, compact minimal hypersurfaces $\Sigma \subset M$ with $\partial \Sigma=\gamma$ and ${\rm index}(\Sigma)=k$. If we want to emphasize the dependence on the Riemannian metric $g$, we will write $c_{k,g}(\gamma)$ instead of $c_k(\gamma)$.

 \subsection{Proposition}
 {\em $c_k(\gamma)<\infty$ for every $k$.}
 
 \begin{proof}
 Suppose $c_k(\gamma)= \infty$ for some $k$. Let $\{\Sigma_j\}_j$ be an infinite sequence of compact minimal hypersurfaces $\Sigma_j \subset M$ with $\partial \Sigma_j=\gamma$, ${\rm index}(\Sigma_j)=k$ and $\Sigma_j \neq \Sigma_{j'}$ whenever $j \neq j'$. 
 
 We have $||\Sigma_j||(M)\leq \Lambda$ for every $j$. By the Monotonicity Formula, the number of connected components is uniformly bounded. By passing to a subsequence, we can suppose the number of connected components of $\Sigma_j$ is equal to a fixed number $p$ for every $j$. There are no closed components by assumption. Let $\Sigma_j^{(1)}, \dots, \Sigma_j^{(p)}$ be the components of $\Sigma_j$. By Theorem \ref{strong-compactness1}, again passing to a subsequence, we can suppose $\Sigma_j^{(l)}$ converges smooth and graphically with multiplicity one to some compact minimal hypersurface $\tilde{\Sigma}^{(l)}$ with $\partial \tilde{\Sigma}^{(l)}\subset \partial M$, for every $l=1, \dots, p$. Since $\Sigma_j$ is embedded for every $j$, the Maximum Principle implies that the collection $\{\tilde{\Sigma}^{(1)}, \dots, \tilde{\Sigma}^{(p)}\}$ is disjoint and $\partial (\tilde{\Sigma}^{(1)} \cup \dots \cup \tilde{\Sigma}^{(p)})=\gamma$. Since $\Sigma_j \neq \Sigma_{j'}$ whenever $j \neq j'$, Theorem \ref{strong-compactness1} implies that the nullity of $\tilde{\Sigma}^{(1)} \cup \dots \cup \tilde{\Sigma}^{(p)}$ is at least one. This contradicts the choice of $\gamma$, which finishes the proof of the proposition.

 \end{proof}
 
 Let $\mathcal{Z}_n(M,\gamma;\mathbb{Z}_2)$ denote the space of $n$-dimensional modulo  2 flat chains $T$ with $\partial T=\gamma$. Let ${\bf I}_{n+1}(M;\mathbb{Z}_2)$ denote the space of $(n+1)$-dimensional modulo 2 flat chains $U$ with $\text{support}(U)\subset M$. These spaces are endowed with the flat topology. When endowed with the ${\bf F}$ metric they will be denoted by $\mathcal{Z}_n(M,\gamma;{\bf F}; \mathbb{Z}_2)$ and ${\bf I}_{n+1}(M;{\bf F}; \mathbb{Z}_2)$, and with the mass metric by $\mathcal{Z}_n(M,\gamma;{\bf M}; \mathbb{Z}_2)$ and ${\bf I}_{n+1}(M;{\bf M}; \mathbb{Z}_2)$.

 \subsection{Proposition}\label{contractibility} {\em The topological spaces $\mathcal{Z}_n(M,\gamma;\mathbb{Z}_2)$ and ${\bf I}_{n+1}(M;\mathbb{Z}_2)$ are  contractible.}
 
 \begin{proof}
 The contractibility of ${\bf I}_{n+1}(M;\mathbb{Z}_2)$ follows exactly as in Claim 5.3 of \cite{marques-neves-lower-bound}. We define $H:[0,1] \times {\bf I}_{n+1}(M;\mathbb{Z}_2) \rightarrow {\bf I}_{n+1}(M;\mathbb{Z}_2)$ by
$$
H(t,U) = U \llcorner \{f\leq t\},
$$
where $f:M \rightarrow [0,1]$ is a Morse function on the manifold with boundary $M$.
The map $H$ is continuous, $H(1,U)=U$ and $H(0,U)=0$ for every $U \in {\bf I}_{n+1}(M;\mathbb{Z}_2)$. 
 
 Now note that $H_n(M^{n+1}, \mathbb{Z}_2)=0$. This has to be true because otherwise one could minimize area inside a nontrivial homology class and produce a closed minimal hypersurface inside $M$. Fix a reference $T \in \mathcal{Z}_n(M,\gamma;\mathbb{Z}_2)$. For any $T'\in \mathcal{Z}_n(M,\gamma;\mathbb{Z}_2)$, we have $\partial(T'-T)=0$. Since $H_n(M^{n+1}, \mathbb{Z}_2)=0$, we can find $U \in {\bf I}_{n+1}(M;\mathbb{Z}_2)$ such that $T'-T=\partial U$. We claim that $U$ is unique. In fact, if $T'-T=\partial V$, $V\in  {\bf I}_{n+1}(M;\mathbb{Z}_2)$, then $W=U-V$ is a top-dimensional chain with $\partial W=0$. Because $\text{support}(W)\subset M$ and $M$ is connected with  nontrivial boundary, the Constancy Theorem for mod 2 flat chains implies $W=0$.  Hence $U$ is unique. In particular, the continuous map
\begin{eqnarray*}
  {\bf I}_{n+1}(M;\mathbb{Z}_2) &\rightarrow& \mathcal{Z}_n(M,\gamma;\mathbb{Z}_2) \\
  U &\mapsto& T+\partial U
\end{eqnarray*}
is a bijection. The Federer-Fleming Isoperimetric Inequality (\cite{federer-fleming}) implies that the inverse of the above map is also continuous, hence the map is a homeomorphism. The Proposition follows.

 \end{proof}
 
 As a consequence, if $b_k(\gamma)$ denotes the $k$-th Betti number of $\mathcal{Z}_n(M,\gamma;\mathbb{Z}_2)$, then $b_0(\gamma)=1$ and $b_k(\gamma)=0$ for every $k\geq 1$. The next Proposition shows that $\mathcal{Z}_n(M,\gamma;{\bf F}; \mathbb{Z}_2)$ has the same Betti numbers
 as $\mathcal{Z}_n(M,\gamma;\mathbb{Z}_2)$.
 
 \subsection{Proposition}\label{betti.F} {\em We have:
 \begin{eqnarray*}
 b_0\left(\mathcal{Z}_n(M,\gamma;{\bf F}; \mathbb{Z}_2)\right) &=& 1,\\
  b_k\left(\mathcal{Z}_n(M,\gamma;{\bf F}; \mathbb{Z}_2)\right) &=& 0  \text{\, \, \ for\, \, } k\geq 1.
 \end{eqnarray*}
 }
 
 \begin{proof}
Fix a reference $T \in \mathcal{Z}_n(M,\gamma;{\bf F};\mathbb{Z}_2)$. Given any $T'\in \mathcal{Z}_n(M,\gamma;{\bf F};\mathbb{Z}_2)$, the proof of Proposition \ref{contractibility} gives a map $\Phi:[0,1] \rightarrow \mathcal{Z}_n(M,\gamma;\mathbb{Z}_2)$ that is continuous in the flat topology, has no concentration of mass:
$$
\lim_{r\rightarrow 0} \sup_{x \in [0,1]} \{{\bf M}(\Phi(x) \llcorner B(p,r)): p\in M\} =0,
$$
with $\Phi(0)=T$ and $\Phi(1)=T'$. The interpolation results of Section 3 of \cite{montezuma} produce out of $\Phi$ a continuous map $\Psi:[0,1] \rightarrow \mathcal{Z}_n(M,\gamma;{\bf M}; \mathbb{Z}_2)$ with $\Psi(0)=T$ and $\Psi(1)=T'$. Since $\Psi$ is also continuous in the ${\bf F}$-metric, we get that
$$
b_0\left(\mathcal{Z}_n(M,\gamma;{\bf F}; \mathbb{Z}_2)\right) = 1.
$$

This can be generalized in the following way. Let $X$ be a finite dimensional compact simplicial complex, and let $\Phi: X\rightarrow \mathcal{Z}_n(M,\gamma;{\bf F};\mathbb{Z}_2)$ be a continuous map in the ${\bf F}$-metric. This implies continuity in the flat topology and no concentration of mass:
$$
\lim_{r\rightarrow 0} \sup_{x \in X} \{{\bf M}(\Phi(x) \llcorner B(p,r)): p\in M\} =0.
$$
According to the proof of Proposition \ref{contractibility}, we can write $\Phi(x) = T+\partial U(x)$ where $U:X \rightarrow {\bf I}_{n+1}(M,\mathbb{Z}_2)$ is continuous. Note that $x \mapsto \partial U(x)$ does not concentrate mass. Hence $\Psi:X\times [0,1] \rightarrow \mathcal{Z}_n(M,\gamma;\mathbb{Z}_2)$ defined by
$$
\Psi(x,t) = T+\partial \left(U(x) \llcorner \{f\leq t\}\right)
$$
is continuous in the flat topology with no concentration of mass:
$$
\lim_{r\rightarrow 0} \sup_{x \in X, t\in [0,1]} \{{\bf M}(\Psi(x,t) \llcorner B(p,r)): p\in M\} =0,
$$
and $\Psi(x,0)=T$, $\Psi(x,1) =\Phi(x)$ for every $(x,t)\in X\times [0,1]$. Section 3 of \cite{montezuma} gives a sequence  of continuous maps
$\Psi_i: X\times [0,1] \rightarrow \mathcal{Z}_n(M,\gamma;{\bf M};\mathbb{Z}_2)$ such that $\Psi_i(x,0)=T$ for every $x \in X$ and
$$
\lim_{i \rightarrow \infty} \sup_{x\in X} {\bf F}(\Psi_i(x,1), \Psi(x,1))=0.
$$
The interpolation work of Section 3 of \cite{marques-neves-lower-bound} can be extended,  using the techniques of \cite{montezuma},  to the setting of constrained boundary . In particular, as in Theorem 3.8 of \cite{marques-neves-lower-bound}, for sufficiently large $i$ we can find a homotopy continuous in the ${\bf F}$-metric between $x \mapsto  \Psi_i(x,1)$ and $x \mapsto \Psi(x,1)$. This implies $\Phi: X\rightarrow \mathcal{Z}_n(M,\gamma;{\bf F};\mathbb{Z}_2)$ is homotopic in the ${\bf F}$-metric to a constant map.  Since $\Phi$ is arbitrary, we conclude that
$$
b_k\left(\mathcal{Z}_n(M,\gamma;{\bf F}; \mathbb{Z}_2)\right) = 0  \text{\, \, \ for\, \, } k\geq 1.
$$
This finishes the proof of the Proposition.

 \end{proof}
 
 \subsection{Theorem}\label{morse.inequalities}{\em The Strong Morse Inequalities for the area functional hold, i.e.
 $$
 c_k(\gamma)-c_{k-1}(\gamma)+ \cdots +(-1)^kc_0(\gamma) \geq  b_k(\gamma)-b_{k-1}(\gamma)+ \cdots +(-1)^kb_0(\gamma) = (-1)^k
 $$
 for every $k\geq 0$.}
 
Before proving Theorem \ref{morse.inequalities}, we will prove the following auxiliary result:

\subsection{Proposition}\label{rational.independence} {\em There exists a sequence of Riemannian metrics $(g_i)_{i\in \mathbb{N}}$ converging to $g$ in the smooth topology so that for each $i\in \mathbb{N}$:
\begin{itemize}
\item $\partial M$ is strictly convex in $(M,g_i)$ and $(M,g_i)$ contains no closed, embedded, minimal hypersurface;
\item every $g_i$-minimal hypersurface with boundary $\gamma$ is $g_i$-nondegenerate;
\item and if 
$$
p_1 \cdot {\rm area}_{g_i}(\Sigma_1) + \cdots + p_N \cdot {\rm area}_{g_i}(\Sigma_N)=0,
$$
with $\{p_1, \dots, p_N\} \subset \mathbb{Z}$, $\{\Sigma_1, \dots, \Sigma_N\}\subset \mathfrak{M}_{g_i}(\gamma),$ and $\Sigma_k \neq \Sigma_l$ whenever $k\neq l$, then
$$
p_1 = \cdots = p_N = 0.
$$
\end{itemize}
 }

\begin{proof}
Let $\hat{\mathcal{M}}$ be the set of smooth Riemannian metrics on $M$ such that $\partial M$ is strictly convex and $M$ contains no closed, embedded, minimal hypersurface. We claim that $\hat{\mathcal{M}}$ is open in the space of all metrics $\mathcal{M}$ on $M$, with respect to the $C^\infty$ topology. 
The convexity condition is clearly open. Now, if the second condition is not satisfied then one can minimize the boundary area over  all regions that enclose a certain closed minimal hypersurface $\Sigma$ (see Remark 2.6 of \cite{white-isoperimetric}). This  implies there is a stable, closed, embedded 
minimal hypersurface inside $M$ with area less than or equal to the area of the boundary. Sharp's Compactness Theorem (\cite{sharp}) allows one to take a limit of these stable hypersurfaces. This shows $\hat{\mathcal{M}}$ is open. In particular, $\hat{\mathcal{M}}$ is a Baire space.

Let $\mathcal{U}_p$ be the set of Riemannian metrics $g\in \hat{\mathcal{M}}$ such that:
\begin{itemize}
\item every $g$-minimal hypersurface with boundary $\gamma$ and index at most $p$ is $g$-nondegenerate;
\item and if 
$$
m_1 \cdot {\rm area}_{g}(\Sigma_1) + \cdots + m_N \cdot {\rm area}_{g}(\Sigma_N)=0,
$$
$\{m_1, \dots, m_N\} \subset \mathbb{Z}$, $\{\Sigma_1, \dots, \Sigma_N\}\subset \mathfrak{M}_{g}(\gamma),$ $|m_k|\leq p$, $\text{index}(\Sigma_k) \leq p$ for every $k$, and $\Sigma_k \neq \Sigma_l$ whenever $k\neq l$, then
$$
m_1 = \cdots = m_N = 0.
$$
\end{itemize}

\subsection{Claim} {\em The set $\mathcal{U}_p$ is open and dense in $\hat{\mathcal{M}}$, for every $p\in \mathbb{Z}_+$.}

\medskip

If $g \in \hat{\mathcal{M}}$, then Theorem 2.1 of White \cite{white-isoperimetric} implies there exists a constant $c>0$ such that $||V||\leq c ||\delta V||$ for every $n$-dimensional varifold in $M$. Now this gives that for any metric sufficiently close to $g$, we will have the inequality $||V||\leq (c+\varepsilon) ||\delta V||$ for some $\varepsilon>0$. (Notice this yields another proof of the openness of $\hat{\mathcal{M}}$.) Hence in that neighborhood there is a uniform bound for the area of any minimal hypersurface with boundary $\gamma$.
We can apply Theorem \ref{strong-compactness1} and the Implicit Function Theorem to conclude the set $\mathcal{U}_p$ is open.

Let $g \in \hat{\mathcal{M}}$ and let $\mathcal{V}\subset  \hat{\mathcal{M}}$ be a $C^\infty$-neighborhood of $g$. There exists a submanifold $\gamma'$  arbitrarily close to $\gamma$ such that any $g$-minimal hypersurface with boundary $\gamma'$ is $g$-nondegenerate. By pulling the metric $g$ back by a diffeomorphism of $M$ arbitrarily close to the identity, sending $\gamma$ to $\gamma'$, we find a metric
$g'\in \mathcal{V}$ such that every $g'$-minimal hypersurface with boundary $\gamma$ is $g'$-nondegenerate.

By Theorem \ref{strong-compactness1}, there are only finitely many $g'$-minimal hypersurfaces with boundary $\gamma$ and index at most $p$. Let $\{S_1, \dots, S_q\}$ be the collection of such hypersurfaces, with $S_k\neq S_l$ whenever $k\neq l$.

Recall that if $\tilde{g}=\exp(2\phi)g'$, then the second fundamental form of $\Sigma$ with respect to $\tilde{g}$ is given by (Besse \cite{besse}, Section 1.163)
\begin{eqnarray*}\label{second.fundamental.form}
A_{\Sigma, \tilde{g}} =  A_{\Sigma,g'} -  g' \cdot (\nabla \phi)^\perp,
\end{eqnarray*}
where $(\nabla \phi)^\perp(x)$ is the component of $\nabla \phi$ normal to $T_x\Sigma$.

We can pick $p_l \in \text{int}(S_l) \setminus (\cup_{k\neq l} S_k)$ for every $l=1, \dots, q$ (see the proof of Lemma 4 of 
\cite{marques-neves-song}). Let $\varepsilon>0$ be sufficiently small so that $B_\varepsilon(p_k)\cap B_\varepsilon(p_l) = \emptyset$ whenever $k\neq l$ and $B_\varepsilon(p_l) \cap \partial M= \emptyset$, $B_\varepsilon(p_l) \cap (\cup_{k\neq l} S_k) = \emptyset$ for every $l=1, \dots, q$. We choose a nonnegative function $f_l\in C_c^\infty(B_\varepsilon(p_l))$, ${f_l}_{|S_l}\not \equiv 0$, such that $(\nabla_{g'} f_l)(x) \in T_xS_l$ for every $x\in S_l$. Hence $S_l$ is still minimal with respect to the metric $\hat{g}(t_1,\dots,t_q) = \exp(2(t_1f_1+\cdots+t_qf_q))g'$, for every $l=1, \dots, q$. 

Let $(t_1^{(i)}, \dots, t_q^{(i)})\in (0,1]^q$ be a sequence converging to zero so that, by putting $g_i=\hat{g}(t_1^{(i)}, \dots, t_q^{(i)})$, we have that  the real numbers
$$
{\rm area}_{g_i}(S_1), \dots, {\rm area}_{g_i}(S_q)
$$
are linearly independent over $\mathbb{Q}$. Compactness Theorem \ref{strong-compactness1}, together with the fact that $S_l$ is $g'$-nondegenerate for every $l=1, \dots, q$, implies that for sufficiently large $i$ the only $g_i$-minimal hypersurfaces with boundary $\gamma$ and index at most $p$ are $S_1, \dots, S_q$. Hence
$g_i \in \mathcal{V} \cap \mathcal{U}_p$ for sufficiently large $i$. This proves density of $\mathcal{U}_p$, which finishes the proof of the claim.

\medskip

The claim implies the set $X=\cap_p \, \mathcal{U}_p$ is Baire-residual in $\hat{\mathcal{M}}$, hence it is also dense. This finishes the proof of the Proposition.

\end{proof}

\begin{proof}[Proof of Theorem \ref{morse.inequalities}]

Let $g_i$ converging to $g$ be like in Proposition \ref{rational.independence}. Theorem \ref{strong-compactness1} and the Implicit Function Theorem imply that for sufficiently large $i$ we have $c_{j,g}(\gamma)=c_{j,g_i}(\gamma)$ for every $j\leq k$. This means that, without loss of generality, we can suppose that if 
$$
p_1 \cdot {\rm area}_{g}(\Sigma_1) + \cdots + p_N \cdot {\rm area}_{g}(\Sigma_N)=0,
$$
with $\{p_1, \dots, p_N\} \subset \mathbb{Z}$, $\{\Sigma_1, \dots, \Sigma_N\}\subset \mathfrak{M}_{g}(\gamma),$ and $\Sigma_m \neq \Sigma_l$ whenever $m\neq l$, then
$$
p_1 = \cdots = p_N = 0.
$$
In particular, for each $t \in \mathbb{R}$ there can be at most one element $\Sigma \in \mathfrak{M}(\gamma)$ with $\text{area}(\Sigma)=t$.

Let $k\in \mathbb{N}$. We denote by $c_j(t)=c_j(\gamma;t)$ the number of elements of $\mathfrak{M}(\gamma)$ with index equal to $j$ and area less than or equal to $t$. Recall that $\text{area}(\Sigma) \leq \Lambda$ for every $\Sigma \in \mathfrak{M}(\gamma)$.

Let 
$$
\mathcal{Z}^t=\{T \in \mathcal{Z}_n(M,\gamma;{\bf F}; \mathbb{Z}_2): {\bf M}(T) < t\}.
$$

 All the homology groups will be computed with coefficients in $\mathbb{Z}_2$. A $k$-dimensional homology class is represented by finite chains of singular $k$-simplices $\sum s_i$, where $s_i:\Delta^k \rightarrow \mathcal{Z}_n(M,\gamma;{\bf F};\mathbb{Z}_2)$ is a continuous map defined on the standard $k$-simplex $\Delta^k$ for every $i$.
 
 \subsection{Homology Min-Max Theorem}{\em Let $\sigma \in H_k(\mathcal{Z}^t, \mathcal{Z}^s)$ be a nontrivial homology class, and let
 $$
 W(\sigma) = \inf_{[\sum s_i]=\sigma}\sup_{i, x\in \Delta^k} {\bf M}(s_i(x)).
 $$
 Then $W(\sigma)\geq s$, and there exists $\Sigma \in \mathfrak{M}(\gamma)$ with ${\rm index}(\Sigma)=k$ and
 $$
 {\rm area}(\Sigma) = W(\sigma).
 $$
 }
 \begin{proof}
 If $W(\sigma)<s$, then there is a representative $[\sum s_i]=\sigma$ with $ s_i(\Delta^k)\subset \mathcal{Z}^s$ for every $i$. This implies $\sigma=0$, which is
 a contradiction. Hence $W(\sigma)\geq s$.
 
 Let $\{\sum s_i^{(j)}\}_j$ be a sequence of representatives ($[\sum s_i^{(j)}]=\sigma$)  such that 
 $$
\lim_{j\rightarrow \infty} \sup_{i, x\in \Delta^k} {\bf M}(s_i^{(j)}(x))=W(\sigma).
 $$
 Associated to  the  chain $\sum s_i^{(j)}$ we have a $\Delta$-complex $X^{(j)}$ and a map $\Phi^{(j)}: X^{(j)} \rightarrow \cup_i s_i^{(j)}(\Delta^k)$ (see Section 2.1 of \cite{hatcher}) that is continuous in the ${\bf F}$-metric. The boundary $\partial X^{(j)}$ is the union of $(k-1)$-faces of  $\sum s_i^{(j)}$ that do not cancel out
 in the calculation of $\partial (\sum s_i^{(j)})$. Because the  chain $\sum s_i^{(j)}$ is a relative cycle, we have that $\Phi^{(j)}(\partial X^{(j)})\subset \mathcal{Z}^s$.
 
 Let $\Pi^{(j)}$ be the homotopy class of $\Phi^{(j)}$ relative to $\partial X^{(j)}$. If
 $$
 {\bf L}(\Pi^{(j)}) =\inf_{\Phi \in \Pi^{(j)}}\sup_{x\in X^{(j)}} {\bf M}(\Phi(x)),
 $$
 then
 $$
 W(\sigma) \leq  {\bf L}(\Pi^{(j)})\leq \sup_{i, x\in \Delta^k} {\bf M}(s_i^{(j)}(x)).
 $$
 In particular, $\lim_{j\rightarrow \infty} {\bf L}(\Pi^{(j)})=W(\sigma)$.
 
 Notice that in Section 3 of \cite{marques-neves-index}, a homotopy class is defined in an unusual way: the homotopy class of an ${\bf F}$-continuous map
 is defined as the class  of all ${\bf F}$-continuous maps that are homotopic to the original one in the flat topology.
But Proposition 3.5 of \cite{marques-neves-infinitely}  has been upgraded to the mass topology in Proposition 3.2 of \cite{marques-neves-lower-bound}. An inspection of Section 3 of \cite{marques-neves-index} shows that the min-max theory also works for usual ${\bf F}$-continuous homotopy classes. The adaptations
to the boundary case made in \cite{montezuma} lead to the existence of $\Sigma^{(j)} \in \mathfrak{M}(\gamma)$ with
${\rm area}(\Sigma_j)={\bf L}(\Pi_j)$. Deformation Theorem A of \cite{marques-neves-index} can also be adapted to the boundary case, by requiring that the diffeomorphisms in the definition of $k$-unstable varifolds (Definition 4.1 of \cite{marques-neves-index}) be such that they fix the boundary $\partial M$. Theorem 6.1 of \cite{marques-neves-index} adapted to the boundary case allows us to choose $\Sigma_j$ such that ${\rm index}(\Sigma_j) \leq k$. By Compactness Theorem \ref{strong-compactness1},  there must be some $j_0$ such that $\Sigma_j=\Sigma_{j_0}$ for all $j\geq j_0$. This means that for $j\geq j_0$, we have
$$
{\bf L}(\Pi^{(j)})=W(\sigma).
$$
Since Theorem 5 of White (\cite{white-minmax}) holds also in the boundary case, the techniques of \cite{marques-neves-lower-bound} can be adapted to give ${\rm index}(\Sigma_j)=k$. This finishes the proof of the Homology Min-Max Theorem.

 \end{proof}

 \subsection{Claim} {\em We have
\begin{eqnarray*}
b_0(\mathcal{Z}^{\Lambda +1}) &=& 1, \\
 b_k(\mathcal{Z}^{\Lambda +1}) &=& 0 \, \, \, \text{for}\, \, \, k\geq 1.
\end{eqnarray*}
 }
 
 From the long exact homology sequence of pairs, it is enough to show
 $$
 b_r(\mathcal{Z}(M,\gamma;{\bf F};\mathbb{Z}_2),\mathcal{Z}^{\Lambda +1})=0
 $$
 for every $r\geq 0$. But this follows immediately from the Homology Min-Max Theorem. The proof of the strong Morse inequalities
 proceed exactly as in the closed case.
 


\end{proof}

\bibliographystyle{amsbook}

\begin{thebibliography}{99}

\bibitem{allard}
W. K. Allard, 
\textit{On the first variation of a varifold,}
Ann. of Math. 95 (1972), 417--491. 


\bibitem{allard-boundary}
W. Allard, \textit{On the first variation of a varifold: boundary behavior.} Ann. of Math. 101 (1975), 418--446.


\bibitem{almgren-varifolds}
F. Almgren, \textit{The theory of varifolds,} Mimeographed notes, Princeton (1965).

\bibitem{AmbCarSha} 
L. Ambrozio, A. Carlotto, and B. Sharp, \textit{Compactness analysis for free boundary minimal hypersurfaces.} Calc. Var. Partial Differential Equations 57 (2018), no. 1, Art. 22, 39 pp. 


\bibitem{besse}
Besse, A. L.,
\textit{Einstein manifolds,}
Classics in Mathematics, Springer-Verlag (1987).

\bibitem{chodosh-mantoulidis}
Chodosh, O., Mantoulidis, C.,
\textit{Minimal surfaces and the Allen-Cahn equation on 3-manifolds: index, multiplicity, and curvature estimates,} 
	arXiv:1803.02716, to appear in Ann. of Math.

\bibitem{delellis-ramic} De Lellis, C.  Ramic, J.,  \textit{Min-max theory for minimal hypersurfaces with boundary,} Ann. Inst. Fourier, 68, no. 5, 
(2018), 1909--1986

\bibitem{delellis-tasnady} De Lellis, C., Tasnady, D., \textit{The existence of embedded minimal hypersurfaces,} J. Differ. Geom. 95, no. 3, (2013) 355--388.

\bibitem{dey} Dey, A., \textit{Compactness of certain class of singular minimal hypersurfaces,}
arXiv:1901.05840 [math.DG] (2019)

\bibitem{federer-fleming}
Federer, H.,  Fleming, W. H.,
\textit{Normal and integral currents,}
Ann. of Math. (2) 72 1960 458--520. 

\bibitem{fleming}
Fleming, W.H.,
\textit{Flat chains over a finite coefficient group,} 
Trans. Amer. Math. Soc. 121 (1966) 160--186. 

\bibitem{hatcher} A. Hatcher,
\textit{Algebraic Topology,} 
Cambridge University Press (2002)

\bibitem{li} Li, Y.,
\textit{Morse index of minimal hypersurfaces from Almgren-Pitts min-max construction,}
preprint  (2019)


\bibitem{marques-neves-index}
Marques, F. C., Neves, A.,
\textit{Morse index and multiplicity of min-max minimal hypersurfaces,}
Camb. J. Math. 4 (2016), no. 4, 463--511. 

\bibitem{marques-neves-infinitely}
Marques, F. C., Neves, A.,
\textit{Existence of infinitely many minimal hypersurfaces in positive Ricci curvature,}
Invent. Math. 209 (2017), no. 2, 577--616.

\bibitem{marques-neves-lower-bound}
Marques, F. C., Neves, A., 
\textit{Morse index of multiplicity one min-max minimal hypersurfaces,} (2018) arXiv:1803.04273, to appear in Advances in Mathematics

\bibitem{marques-neves-morse-survey}
Marques, F.C., Neves, A., \textit{Morse theory for the area functional,}
to appear in Sao Paulo Journal of Mathematical Sciences, special volume for Manfredo do Carmo

\bibitem{marques-neves-song}
Marques, F. C., Neves, A. Song, A.,
\textit{Equidistribution of minimal hypersurfaces for generic metrics,}
	Invent. Math. 216, no. 2, (2019) 421--443




\bibitem{montezuma}
R. Montezuma, \textit{A mountain pass theorem for minimal hypersurfaces with fixed boundary}, (2018) arXiv:1802.04757 [math.DG]



\bibitem{morse-book}
Morse, M., \textit{The calculus of variations in the large,} American Mathematical Society  Colloquium Publications, vol. 18, New York (1934)

\bibitem{morse-tompkins}
Morse, M., Tompkins, C., \textit{The existence of minimal surfaces of general critical types,} Ann. of Math. 40, no. 2, (1939), 443--472

\bibitem{palais-smale}
Palais, R. S.,  Smale, S.,
\textit{A generalized Morse theory,}
Bull. Amer. Math. Soc. 70 (1964), 165--172. 

\bibitem{pitts} 
J. Pitts, \textit{Existence and regularity of minimal surfaces on Riemannian manifolds,} Mathematical Notes 27, Princeton University Press, Princeton, (1981).


\bibitem{schoen-simon} 
R. Schoen and L. Simon, \textit{Regularity of stable minimal hypersurfaces,} 
Comm. Pure Appl. Math. 34 (1981), 741--797. 

\bibitem{sharp}
Sharp, B.,
\textit{Compactness of minimal hypersurfaces with bounded index,} 
J. Differential Geom. 106 (2017), no. 2, 317--339. 

\bibitem{shiffman}
Shiffman, M., \textit{The Plateau problem for non-relative minima,} Ann. of Math. 40 (1939), no.4,  834--854.

\bibitem{simon} L. Simon, \textit{Lectures on geometric measure theory,}
Proceedings of the Centre for Mathematical Analysis, Australian National University,  Canberra, (1983).  

\bibitem{struwe}  Struwe, M., \textit{On a critical point theory for minimal surfaces spanning a wire in $\mathbb{R}^n$,}  J. Reine Angew.
Math. 349, (1984) 1--23

\bibitem{white-parametric}
White, B.,
\textit{The space of m-dimensional surfaces that are stationary for a parametric elliptic functional,} 
Indiana Univ. Math. J. 36 (1987), no. 3, 567--602.

\bibitem{white2}
B. White, \textit{The space of minimal submanifolds for varying Riemannian metrics,} Indiana Univ. Math. J. 40 (1991), 161--200.

\bibitem{white-minmax}
B. White, \textit{A strong minimax property of nondegenerate minimal submanifolds,}  J. Reine Angew. Math. 457 (1994), 203--218. 

\bibitem{white-isoperimetric}
White, B., 
\textit{Which ambient spaces admit isoperimetric inequalities for submanifolds?,}
J. Differential Geom. 83 (2009), no. 1, 213--228. 

\bibitem{white-maximum-principle}
White, B.,
\textit{The maximum principle for minimal varieties of arbitrary codimension,}
Comm. Anal. Geom. 18 (2010), no. 3, 421--432. 

\bibitem{white3}
B. White, \textit{On the bumpy metrics theorem for minimal submanifolds,} 	Amer. J. Math. 139 (2017), no. 4, 1149--1155. 

\bibitem{zhou-multiplicity}
Zhou, X., 
\textit{On the Multiplicity One Conjecture in Min-max theory,} 	arXiv:1901.01173 [math.DG] (2019)

\bibitem{zhou-zhu}
Zhou, X., Zhu, J., \textit{Existence of hypersurfaces with prescribed mean curvature I ? generic min-max,}
arXiv:1808.03527 (2018)




\end{thebibliography}

\end{document}